\newcommand{\End}{\text{\rm End}}
\newcommand{\thmref}[1]{Theorem~\ref{#1}}
\newcommand{\lemref}[1]{Lemma~\ref{#1}}
\newcommand{\eqnref}[1]{~(\ref{#1})}
\newcommand{\germ}{\mathfrak}
\newtheorem{thm}{Theorem}[section]
\newtheorem{lem}[thm]{Lemma}
\begin{document}
\title*{Free field realizations of the Date-Jimbo-Kashiwara-Miwa algebra}
\author{Ben Cox, Vyacheslav Futorny and Renato Alessandro Martins }
\institute{Ben Cox \at Department of Mathematics,
The College of Charleston,
66 George Street,
Charleston SC 29424, USA \email{coxbl@cofc.edu} \and 
Vyacheslav Futorny \at Departamento de Matem\'atica,
Instituto de Matem\'atica e Estat\'istica,
S\~ao Paulo, Brasil
\email{vfutorny@gmail.com} \and Renato Alessandro Martins \at Departamento de Matem\'atica,
Instituto de Matem\'atica e Estat\'istica,
S\~ao Paulo, Brasil 
\email{renatoam@ime.usp.br}}

\maketitle
\abstract*{We use the description of the universal central extension of the DJKM algebra $\mathfrak{sl}(2,  R)$ where $ R=\mathbb C[t,t^{-1},u\,|\,u^2=t^4-2ct^2+1 ]$ given in \cite{MR2813377} to construct realizations of the DJKM algebra in terms of sums of partial differential operators.}
\abstract{We use the description of the universal central extension of the DJKM algebra $\mathfrak{sl}(2,  R)$ where $ R=\mathbb C[t,t^{-1},u\,|\,u^2=t^4-2ct^2+1 ]$ given in \cite{MR2813377} to construct realizations of the DJKM algebra in terms of sums of partial differential operators.}

\keywords{Wakimoto Modules,  DJKM Algebras, Affine Lie 
Algebras, Fock Spaces}

%
\section{Introduction}  
 
Suppose $R$ is a commutative algebra and $\mathfrak g $ is a simple Lie algebra, both defined over the complex numbers.  From the work of C. Kassel and J.L. Loday (see \cite{MR694130}, and \cite{MR772062}) it is shown that the universal central extension $\hat{{\mathfrak g}}$ of ${\mathfrak g}\otimes R$ is the vector space $\left({\mathfrak g}\otimes R\right)\oplus \Omega_R^1/dR$ where $\Omega_R^1/dR$ is the space of K\"ahler differentials modulo exact forms (see \cite{MR772062}).  Let  $\overline{a}$ denote the image of $a\in\Omega^1_R$ in the quotient $\Omega^1_R/dR$ and let $(-,-)$ denote the Killing form on $\mathfrak g$.   Then more precisely the universal central extension $\hat{{\mathfrak g}}$ is the vector space  $\left({\mathfrak g}\otimes R\right)\oplus \Omega_R^1/dR$ made into a Lie algebra by defining
$$
[x\otimes f,y\otimes g]:=[xy]\otimes fg+(x,y)\overline{fdg},\quad [x\otimes f,\omega]=0
$$
for $x,y\in\mathfrak g$, $f,g\in R$,  and $\omega\in \Omega_R^1/dR$.        A natural and useful question comes to mind
as to whether there exists free field or Wakimoto type realizations of these algebras.  M. Wakimoto and B. Feigin and E. Frenkel answered this quesiton when $R$ is the ring of Laurent polynomials in one variable (see \cite{W} and \cite{MR92f:17026}).  The goal of this paper is to describe such a realization for the universal central extension of $\mathfrak g=\mathfrak{sl}(2,R)$ where $R=\mathbb C[t,t^{-1},u|u^2=t^4-2ct^2+1]$, $c\neq \pm 1$, is a DJKM algebra.

   In Kazhdan and Luszig's explicit study of the tensor structure of modules for affine Lie algebras (see \cite{MR1186962} and \cite{MR1104840}) the ring of functions regular everywhere except at a finite number of points appears naturally.   This algebra M. Bremner gave the name {\it $n$-point algebra}.  In particular in the monograph  \cite[Ch. 12]{MR1849359} algebras of the form $\oplus _{i=1}^n\mathfrak g((t-x_i))\oplus\mathbb Cc$ appear in the description of the conformal blocks.  These contain the $n$-point algebras $ \mathfrak g\otimes \mathbb C[(t-x_1)^{-1},\dots, (t-x_N)^{-1}]\oplus\mathbb Cc$ modulo part of the center $\Omega_R/dR$.   M. Bremner explicitly described the universal central extension of such an algebra in \cite{MR1261553}.  
   
   Let $R$ denote the ring of rational functions on the Riemann sphere   $S^2=\mathbb  C\cup\{\infty\}$ with poles only in the set  of distinct points $\{a_1,a_2,a_3,\infty \}\subset S^2$.  In the literature one can find the fact that the automorphism group $PGL_2(\mathbb C)$ of $\mathbb C(s)$ is simply 3-transitive and $R$ is a subring of $\mathbb C(s)$, so that $R$ is isomorphic to the ring of rational functions with poles at $\{\infty,0,1,a\}$.  This isomorphism motivates one setting $a=a_4$ and then defining the {\it $4$-point ring} as $R=R_a=\mathbb C[s,s^{-1},(s-1)^{-1},(s-a)^{-1}]$ where $a\in\mathbb C\backslash\{0,1\}$.    Letting $S:=S_b=\mathbb C[t,t^{-1},u]$ where $u^2=t^2-2bt+1$ with $b$ a complex number not equal to $\pm 1$,   M. Bremner has shown us that $R_a\cong S_b$. 
The latter ring is $\mathbb Z_2$-graded where $t$ is even and $u$ is odd,  and is a cousin to super Lie algebras, so this ring lends itself to the techniques of conformal field theory.  M. Bremner gave an explicit  description of the universal central extension of $\mathfrak g\otimes R$, in terms of ultraspherical (Gegenbauer) polynomials where $R$ is this four point algebra (see \cite{MR1249871}).      Motivated by talks with M. Bremner, the first author gave free field realizations for the four point algebra where the center acts nontrivially (see \cite{MR2373448}). 

In Bremner's study of the elliptic affine Lie algebras, $\mathfrak{sl}(2, R)  \oplus\left( \Omega_R/dR\right)$ where $R=\mathbb C[x,x^{-1},y\,|\,y^2=4x^3-g_2x-g_3]$,  he next explicitly described the universal central extension of this algebra in terms of Pollaczek polynomials (see  \cite{MR1303073}).   Variations of these algebras appear in recent work of A. Fialowski and M. Schlichenmaier \cite{FailS} and \cite{MR2183958}.  Together with Andr\'e Bueno, the first and second authors of the present article described free field type realizations of the elliptic Lie algebra where $R=\mathbb C[t,t^{-1},u\,|, u^2=t^3-2bt^2-t]$, $b\neq \pm 1$ (see \cite{MR2541818}).

In  \cite{MR701334}  Date, Jimbo, Kashiwara and Miwa described
integrable systems arising from the Landau-Lifshitz differential equation. The integrable hierarchy of this equation was shown to be written in terms of free fermions defined on an elliptic curve. These authors introduced an 
infinite-dimensional Lie algebra which is a one dimensional central extension of $\mathfrak g\otimes \mathbb C[t,t^{-1},u|u^2=(t^2-b^2)(t^2-c^2)]$ where $b\neq \pm c$ are complex
 constants and $\mathfrak g$ is a simple finite dimensional Lie algebra. This algebra, which we call the DJKM algebra,  acts on the space of solutions of the Landau-Lifshitz equation as infinitesimal B\"acklund transformations.

In \cite{MR2813377} the first and second authors provided the commutations relations of
the universal central extension of the DJKM Lie algebra in terms of a basis of the algebra and certain polynomials.  More precisely in order to pin down this central extension, one needed to describe four \color{black} families of polynomials that appeared as coefficients in the commutator formulae.  Two of these families of polynomials are given in terms of
elliptic integrals and the other two families are slight variations of ultraspherical polynomials.  
One of the families is precisely given by the associated ultraspherical polynomials (see \cite{MR663314}).   The associated ultraspherical polynomials in turn are, up to factors of ascending factorials, particular associated Jacobi polynomials.   The associated Jacobi polynomials are known to satisfy certain fourth order linear differential equations (see \cite{MR2191786} and formula (48) in \cite{MR915027}).    In \cite{MR3090080} the first and second authors together with Juan Tirao, showed that the remaining family of polynomials are not of classical type and are orthogonal.   The aim of the present article is to provide free field type realizations of the DJKM algebra. 

The Lie algebra above is an example of a Krichever-Novikov  algebra (see (\cite{MR902293}, \cite{MR925072}, \cite{MR998426}).  A fair amount of interesting and
fundamental work has be done by Krichever, Novikov, Schlichenmaier, and Sheinman on the representation theory of these algebras.
 In particular Wess-Zumino-Witten-Novikov theory and analogues of the Knizhnik-Zamolodchikov equations are developed for  these algebras
(see the survey article \cite{MR2152962}, and for example \cite{MR1706819}, \cite{MR1706819}, \cite{MR2072650}, \cite{MR2058804}, \cite{MR1989644}, and \cite{MR1666274}).

If these realizations had been constructed in a logically historical fashion one would have first described them for a three point algebra.  This is  the case where $R$ denotes the ring of rational functions with poles only in the set of distinct complex numbers $\{a_1,a_2,a_3\}$. This algebra is isomorphic to $\mathbb C[s,s^{-1},(s-1)^{-1}]$. M. Schlichenmaier gave an isomorphic description of the three point algebra as $\mathbb C[(z^2-a^2)^k,z(z^2-a^2)^k\,|\, k\in\mathbb Z]$ where $a\neq 0$ (see \cite{MR2058804}). 
E. Jurisich and the first author of the present paper proved that $R\cong \mathbb C[t,t^{-1},u\,|\,u^2=t^2+4t]$ so that the three point algebra looks more like $S_b$ above.
The main result of \cite{CoxJur}  provides a natural free field realization in terms of a $\beta$-$\gamma$-system and the oscillator algebra of the three point affine Lie algebra when $\mathfrak g=\mathfrak{sl}(2,\mathbb C)$.   Besides M. Bermner's article mentioned above, other work on the universal central extension of $3$-point algebras can be found in \cite{MR2286073}. 
The three point algebra is probably  the simplest non-trivial example of a Krichever-Novikov algebra beyond an affine Kac-Moody algebra 
 (see \cite{MR902293}, \cite{MR925072}, \cite{MR998426}).  Interesting foundational and 
fundamental research has be done by Krichever, Novikov, Schlichenmaier, and Sheinman on the representation theory of the Krichever-Novikov algebras. 
 In particular Wess-Zumino-Witten-Novikov theory and analogues of the Knizhnik-Zamolodchikov equations are developed for  these algebras 
(see the survey article \cite{MR2152962}, and for example \cite{MR1706819}, \cite{MR2072650}, \cite{MR2058804}, \cite{MR1989644}, and \cite{MR1666274}).

Previous related work on highest weight modules of $\mathfrak{sl}(2,R)$ written in terms of infinite sums of partial differential operators is be found in the early paper of H. P. Jakobsen and V. Kac \cite{JK}.

M. Wakimoto's motivation for the use of a free field realization was to prove a conjecture of V. Kac and D. Kazhdan on the character of certain irreducible representations of affine Kac-Moody algebras at the critical level (see \cite{W} and \cite{MR2146349}). Interestingly free field realizations have been used by V.~V. Schechtman and A.~N. Varchenko (and others) to provide integral solutions to the KZ-equations (see for example \cite{MR1077959} and \cite{MR1629472} and their references).  Another application is to help in describing the center of a certain completion of the enveloping algebra of an affine Lie algebra at the critical level and this determination of the center is an important ingredient in the formulation of the geometric Langland's correspondence \cite{MR2332156}.  As a last bit of motivation, the work  B. Feigin and E. Frenkel has shown that free field realizations of an affine Lie algebra appear naturally in the context of the generalized AKNS hierarchies \cite{MR1729358}.

\section{Description of the universal central extension of DJKM algebras}
As we described in the introduction, C. Kassel showed the universal central extension of the current algebra $\mathfrak g\otimes R$ where $\mathfrak g$ is a simple finite dimensional Lie algebra defined over $\mathbb C$, is the vector space $\hat{\mathfrak g}=(\mathfrak g\otimes R)\oplus \Omega_R^1/dR$ with Lie bracket given by
$$
[x\otimes f,Y\otimes g]=[xy]\otimes fg+(x,y)\overline{fdg},  [x\otimes f,\omega]=0,  [\omega,\omega']=0,
$$
  where $x,y\in\mathfrak g$, and $\omega,\omega'\in \Omega_R^1/dR$ and $(x,y)$  denotes the Killing  form  on $\mathfrak g$.

Consider the polynomial
$$
p(t)=t^n+a_{n-1}t^{n-1}+\cdots+a_0
$$
where $a_i\in\mathbb C$ and $a_n=1$. 
Fundamental to the explicit description of the universal central extension for $R=\mathbb C[t,t^{-1},u|u^2=p(t)]$ is the following:
\begin{theorem}[\cite{MR1303073},Theorem 3.4]  Let $R$ be as above.  The set 
$$
\{\overline{t^{-1}\,dt},\overline{t^{-1}u\,dt},\dots, \overline{t^{-n}u\,dt}\}
$$
 forms a basis of $\Omega_R^1/dR$ (omitting $\overline{t^{-n}u\ dt}$ if $a_0=0$).   
\end{theorem}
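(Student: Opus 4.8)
The plan is to reduce the statement to a concrete computation with Laurent polynomials by first pinning down the $\mathbb C[t,t^{-1}]$-module structure of $\Omega_R^1$. Write $A=\mathbb C[t,t^{-1}]$, so that $R=A\oplus Au$ is free over $A$ with basis $\{1,u\}$. Differentiating $u^2=p(t)$ gives $2u\,du=p'\,dt$ in $\Omega_R^1$, and multiplying this relation by $u$ and using $u^2=p$ gives a second relation $2p\,du=p'u\,dt$. I would check that, as an $A$-module, $\Omega_R^1$ is generated by $dt,\ u\,dt,\ du$ subject to the single relation $2p\,du=p'u\,dt$; concretely $\Omega_R^1\cong A^3/A\cdot(0,-p',2p)$, where the triple $(\alpha,\beta,\gamma)$ stands for $\alpha\,dt+\beta u\,dt+\gamma\,du$. (The $u\,du$ generator is eliminated using $2u\,du=p'\,dt$, whose coefficient $2$ is a unit.)

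Next I would compute $\Omega_R^1/dR$ explicitly in these coordinates. Since $R=A\oplus Au$, the exact forms are $d(\phi+\psi u)=\phi'\,dt+\psi'u\,dt+\psi\,du$, i.e.\ the $A$-span of the triples $(\phi',\psi',\psi)$. Quotienting $A^3$ first by $dR$ and then by the module relation, I would establish a $\mathbb C$-linear isomorphism
\[
\Omega_R^1/dR\ \xrightarrow{\ \sim\ }\ \mathbb C\oplus\bigl(A/\operatorname{im}\Phi\bigr),\qquad \alpha\,dt+\beta u\,dt+\gamma\,du\ \mapsto\ \bigl(\operatorname{res}_0\alpha,\ \beta-\gamma'\bigr),
\]
where $\Phi\colon A\to A$ is the linear map $\Phi(h)=2h'p+3hp'$. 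Under this isomorphism $\overline{t^{-1}\,dt}$ maps to the generator $(1,0)$ of the $\mathbb C$-summand, while $\overline{t^{j}u\,dt}$ maps to the class of $t^{j}$ in $A/\operatorname{im}\Phi$. Thus the theorem becomes equivalent to the assertion that $\{t^{-1},\dots,t^{-n}\}$ (resp.\ $\{t^{-1},\dots,t^{-(n-1)}\}$ when $a_0=0$) is a basis of $A/\operatorname{im}\Phi$, with the separate $\mathbb C$-summand accounting for $\overline{t^{-1}\,dt}$.

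The computation $\Phi(t^{k})=\sum_{i=0}^{n}a_i(2k+3i)\,t^{k+i-1}$ is the engine for both halves. For spanning, I read this as a recurrence: its top term is $(2k+3n)t^{k+n-1}$ and its bottom term is $2k\,a_0\,t^{k-1}$. Since $2k+3n\neq0$ for the relevant $k$, each relation $\Phi(t^{k})\equiv0$ lets me rewrite a high power $t^{k+n-1}$ in terms of lower ones; iterating pushes every $t^{j}$ with $j\geq0$ down into $\operatorname{span}\{t^{-1},\dots,t^{-n}\}$. Dually, when $a_0\neq0$ the bottom coefficient $2k\,a_0$ is nonzero for $k\neq0$, so the same relations rewrite each very negative power $t^{k-1}$ in terms of higher ones, pushing every $t^{j}$ with $j\leq -n-1$ up into the same span. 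This shows the proposed set spans $A/\operatorname{im}\Phi$.

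The main obstacle is linear independence, which I would prove by a degree count on $\Phi$. It suffices to show that if $h\neq0$ then $\Phi(h)$ cannot lie in $T:=\operatorname{span}\{t^{-1},\dots,t^{-n}\}$ (degrees in $[-n,-1]$). Let $e\leq d$ be the lowest and highest exponents occurring in $h$. The coefficient of $t^{e-1}$ in $\Phi(h)$ is $2e\,a_0\,h_e$ and that of $t^{d+n-1}$ is $(2d+3n)h_d$, since each extreme degree has a unique source $(k,i)=(e,0)$, resp.\ $(d,n)$. When $a_0\neq0$, $e\neq0$ and $2d+3n\neq0$ these pin down the exact minimal and maximal degrees of $\Phi(h)$ as $e-1$ and $d+n-1$; membership in $T$ then forces $1-n\leq e\leq d\leq -n$, which is impossible. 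The genuinely delicate points are the degenerate subcases $e=0$ and $2d+3n=0$ (the latter occurring only for $n$ even, at $d=-3n/2$): in the first, $\Phi(h)$ acquires a term of degree $\geq n-1\geq0$ and so escapes $T$; in the second, its bottom term sits in degree $e-1\leq d-1=-3n/2-1<-n$ and again escapes $T$. Each case yields a contradiction, so $h=0$, giving independence. Finally I would record the modification for $a_0=0$: the bottom of each relation shifts by one, the downward recurrence now bottoms out one step earlier, and the surviving monomials are $\{t^{-1},\dots,t^{-(n-1)}\}$, which is exactly the omission of $\overline{t^{-n}u\,dt}$ asserted in the statement.
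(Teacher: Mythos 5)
Your reduction of the theorem to the claim that $\{t^{-1},\dots,t^{-n}\}$ is a basis of $A/\operatorname{im}\Phi$, with $\Phi(h)=2h'p+3hp'$, is correct, and it is worth saying how it sits relative to the paper: the paper gives no proof of this statement at all (it is quoted from Bremner), and the only related ingredient it records is the recursion lemma $((m+1)n+im)\,t^{n+i-1}u\,dt\equiv-\sum_{j=0}^{n-1}((m+1)j+mi)a_j t^{i+j-1}u\,dt \bmod dR$, which for $m=2$ is exactly your family of relations $\Phi(t^k)\equiv 0$. So your spanning argument coincides with the tool the paper imports, while the explicit isomorphism $\Omega^1_R/dR\cong\mathbb C\oplus\bigl(A/\operatorname{im}\Phi\bigr)$ and the degree-pinning independence argument are genuine additions, and for $a_0\neq 0$ (the only case the paper uses, since the DJKM polynomial is $p(t)=t^4-2ct^2+1$) your case analysis ($e=0$; $2d+3n=0$; generic) is exhaustive and correct. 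One slip to fix: $dR$ is only a $\mathbb C$-subspace of $A^3$, not an $A$-submodule, so ``the $A$-span of the triples $(\phi',\psi',\psi)$'' is wrong as written — for instance $(0,0,t)=t\cdot d(u)$ lies in that $A$-span but is not exact, and your map $F$ sends it to $(0,-1)\neq 0$. Your displayed isomorphism is nevertheless valid precisely because $\ker F$ equals the honest set of exact triples, so this is terminology, not mathematics.

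The genuine gap is your final sentence, the $a_0=0$ case. As stated (both by you and in the paper's transcription of Bremner, which drops his hypotheses), the claim is false when $t^2\mid p$, so no one-line ``modification'' can prove it at this level of generality. Concretely, for $p=t^2$ the relation $2p\,du=p'u\,dt$ reads $2t^2\,du=2tu\,dt$, hence $du=t^{-1}u\,dt$ and $\overline{t^{-1}u\,dt}=\overline{du}=0$, so the proposed basis contains the zero vector; in your coordinates, for $p=t^3+t^2$ one computes $\Phi(t^{-3})=3t^{-1}$, so again $t^{-1}\equiv 0$ in $A/\operatorname{im}\Phi$. What rescues the statement is the implicit smoothness assumption: if $a_0=0$ but $p$ is squarefree, then $a_1\neq 0$, and then your sketch does go through, for a reason you should make explicit — the bottom coefficient of $\Phi(t^k)$ becomes $(2k+3)a_1$, which never vanishes because $2k+3$ is odd, so the upward recursion has no exceptional $k$ (in particular $t^{-n}$ itself gets rewritten), and in the independence argument the bottom degree of $\Phi(h)$ is pinned at $e$ with coefficient $(2e+3)a_1h_e\neq 0$, forcing either $-(n-1)\leq e\leq d\leq -n$, impossible, or the escape via $d=-3n/2$ exactly as in your case 3. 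State the hypothesis $a_1\neq 0$ and run these two checks; without that, the last step of your proof asserts something false.
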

%

\begin{lem}[\cite{MR2813377}]  If $u^m=p(t)$ and $R=\mathbb C[t,t^{-1},u|u^m=p(t)]$, then in $\Omega_R^1/dR$, one has
\begin{equation}\label{recursionreln}
((m+1)n+im)t^{n+i-1}u\,dt \equiv - \sum_{j=0}^{n-1}((m+1)j+mi)a_jt^{i+j-1}u\,dt\mod dR
\end{equation}
\end{lem}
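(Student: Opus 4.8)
The plan is to derive the recursion directly from the defining property of $\Omega_R^1/dR$, namely that every exact form $df$ with $f\in R$ is zero in the quotient. The whole argument reduces to differentiating one cleverly chosen element of $R$ and then substituting the relations coming from $u^m=p(t)$, so that every term collapses to a $\mathbb{C}$-multiple of some $t^k u\,dt$.

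First I would differentiate the defining relation $u^m=p(t)$ inside the module of K\"ahler differentials $\Omega_R^1$, obtaining $m\,u^{m-1}\,du = p'(t)\,dt$; multiplying this by $u$ gives the cleaner identity $u^m\,du = \tfrac1m\,u\,p'(t)\,dt$, in which no inverse power of $u$ appears. The key observation, and the one genuinely nonroutine step, is to realize that the element to differentiate is $f=t^i u^{m+1}$: the exponent $m+1$ is chosen precisely so that after applying the two substitutions $u^{m+1}=u\,p(t)$ and $u^m\,du=\tfrac1m\,u\,p'(t)\,dt$, exactly one power of $u$ survives and the whole expression lands in the span of the $t^k u\,dt$.

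Carrying this out,
\begin{equation*}
d\big(t^i u^{m+1}\big) = i\,t^{i-1}u^{m+1}\,dt + (m+1)\,t^i u^m\,du = \Big(i\,t^{i-1}p(t) + \tfrac{m+1}{m}\,t^i p'(t)\Big)u\,dt,
\end{equation*}
and since the left-hand side is exact it is congruent to $0$ modulo $dR$. Clearing the denominator $m$ gives $\big(mi\,t^{i-1}p(t) + (m+1)\,t^i p'(t)\big)u\,dt \equiv 0 \pmod{dR}$.

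Finally I would expand $p(t)=\sum_{j=0}^n a_j t^j$ and $p'(t)=\sum_{j=0}^n j a_j t^{j-1}$ with $a_n=1$, so that the coefficient of $t^{i+j-1}u\,dt$ is $a_j\big(mi+(m+1)j\big)$. Isolating the top term $j=n$, whose coefficient is $(m+1)n+mi$, and transposing the terms $j=0,\dots,n-1$ to the other side yields exactly \eqnref{recursionreln}. I expect the only real obstacle to be the discovery of the correct $f$; once the exponent $m+1$ is identified the remaining manipulations are a routine expansion, and the single point that must be verified with care is that $m\,u^{m-1}\,du = p'(t)\,dt$ is indeed the correct relation in $\Omega_R^1$ and that multiplying it by $u$ keeps every quantity polynomial in $u$.
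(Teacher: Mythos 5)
Your proof is correct: differentiating $t^iu^{m+1}$, using $u^{m+1}=u\,p(t)$ and $mu^{m-1}\,du=p'(t)\,dt$, and extracting the top coefficient $a_n=1$ yields exactly \eqnref{recursionreln}. The paper itself states this lemma without proof, citing \cite{MR2813377}, and your computation is precisely the standard argument given in that reference, so there is no substantive difference in approach.
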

%
%
%

In the DJKM algebra setting one takes $m=2$ and $p(t)=(t^2-a^2)(t^2-b^2)$ with $a\neq \pm b$ and neither $a$ nor $b$ is zero.  The Lemma above leads one to introduce the polynomials  $P_k:=P_k(c)$ in $c=(a^2+b^2)/2$ satisfy the recursion relation 
$$
(6+2k)P_k(c)   
=4k cP_{k-2}(c)-2(k-3)P_{k-4}(c)
$$
for $k\geq 0$.
Setting 
$$
P(c,z):=\sum_{k\geq -4}P_k(c)z^{k+4}=\sum_{k\geq 0}P_{k-4}(c)z^{k}.
$$
one proves in \cite{MR2813377} that
\begin{align}\label{funde}
\frac{d}{dz}P(c,z)-&\frac{3z^4-4c z^2+1}{z^5-2cz^3+z}P(c,z) \\
&=\frac{2\left(P_{-1}+cP_{-3} \right)z^3 +P_{-2} z^2+(4cz^2-1)P_{-4} }{z^5-2cz^3+z} \notag
\end{align}
%
\subsection{Elliptic Case 1}
Taking the initial conditions $P_{-3}(c)=P_{-2}(c)=P_{-1}(c)=0$ and $P_{-4}(c)=1$ then we arrive at a generating function 
$$
P_{-4}(c,z):=\sum_{k\geq -4}P_{-4,k}(c)z^{k+4}=\sum_{k\geq 0}P_{-4,k-4}(c)z^{k},
$$
expressed in terms of an elliptic integral
\begin{align*}
P_{-4}(c,z)&=z\sqrt{1-2 c z^2+z^4}\int \frac{4cz^2-1}{z^2(z^4-2c z^2+1)^{3/2}}\, dz.
\end{align*}
The way that we interpret the right hand integral is to expand $(z^4-2c z^2+1)^{-3/2}$ as a Taylor series about $z=0$ and then formally integrate term by term and multiply the result by the Taylor series of $z\sqrt{1-2 c z^2+z^4}$.    More precisely one integrates formally with zero constant term
 $$
 \int (4c-z^{-2})\sum_{n=0}^\infty Q_n^{(3/2)}(c)z^{2n}\,dz =\sum_{n=0}^\infty \frac{4cQ_n^{(3/2)}(c)}{2n+1}z^{2n+1} -\sum_{n=0}^\infty \frac{Q_n^{(3/2)}(c)}{2n-1}z^{2n-1}
 $$ 
 where $Q_n^{(\lambda)}(c)$ is the $n$-th Gegenbauer polynomial.
When multiplying this by 
$$
z\sqrt{1-2cz^2+z^4}=\sum_{n=0}^\infty Q_n^{(-1/2)}(c)z^{2n+1}
$$
one obtains the series $P_{-4}(c,z)$.

\subsection{Elliptic Case 2}
If we take initial conditions $P_{-4}(c)=P_{-3}(c)=P_{-1}(c)=0$ and $P_{-2}(c)=1$ then we arrive at a generating function defined in terms of another elliptic integral:
\begin{align*}
P_{-2}(c,z)&=z\sqrt{1-2 c z^2+z^4}\int \frac{1}{ (z^4-2c z^2+1)^{3/2}}\, dz.
\end{align*}

\subsection{Gegenbauer  Case 3}
If we take $P_{-1}(c)=1$, and $P_{-2}(c)=P_{-3}(c)=P_{-4}(c)=0$ and set 
$$
P_{-1}(c,z)=\sum_{n\geq 0}P_{-1,n-4}z^n,
$$
then we get a solution which after solving for the integration constant can be turned into a power series solution 
\begin{align*}
P_{-1}(c,z)
&=\frac{1}{c^2-1}\left(cz-z^3-cz+c^2z^3-\sum_{k=2}^\infty c Q_n^{(-1/2)}(c)z^{2n+1}\right)  
\end{align*}
where $Q^{(-1/2)}_n(c)$ is the $n$-th Gegenbauer polynomial.   Hence
\begin{align*}
P_{-1,-4}(c)&=P_{-1,-3}(c)=P_{-1,-2}(c) =P_{-1,2m}(c)=0,\quad P_{-1,-1}(c)=1,  \\
P_{-1,2n-3}(c)&=\frac{-cQ_{n}(c)}{c^2-1},
\end{align*}
for $m\geq 0$ and $n\geq 2$ .

\subsection{Gegenbauer Case 4}
Next we consider the initial conditions $P_{-1}(c)=0=P_{-2}(c)=P_{-4}(c)=0$ with $P_{-3}(c)=1$ and set 
$$
P_{-3}(c,z)=\sum_{n\geq 0}P_{-3,n-4}(c)z^n.
$$
then we get a power series solution
\begin{align*}
P_{-3}(c,z)
&=\frac{1}{c^2-1}\left(c^2z-cz^3-z+cz^3-\sum_{k=2}^\infty Q_n^{(-1/2)}(c)z^{2n+1}\right)  \\
\end{align*}
where $Q^{(-1/2)}_n(c)$ is the $n$-th Gegenbauer polynomial.  Hence
\begin{align*}
P_{-3,-4}(c)&=P_{-3,-2}(c)=P_{-3,-1}(c) =P_{-1,2m}(c)=0,\quad P_{-3,-3}(c)=1,  \\
P_{-3,2n-3}(c)&=\frac{-Q_{n}(c)}{c^2-1},
\end{align*}
for $m\geq 0$ and $n\geq 2$.
\section{Generators and relations}

Set $\omega_0=\overline{t^{-1}\,dt}$,  and $\omega_{-k}=\overline{t^{-k}u\,dt}$, $k=1,2,3,4$.

\begin{theorem}[see  \cite{MR2813377}] Let $\mathfrak g$ be a simple finite dimensional Lie algebra over the complex numbers with   the Killing form $(\,|\,)$ and define $\psi_{ij}(c)\in\Omega_R^1/dR$ by
\begin{equation}
\psi_{ij}(c)=\begin{cases} 
\omega_{i+j-2}&\quad \text{ for }\quad i+j=1,0,-1,-2 \\
P_{-3,i+j-2}(c) (\omega_{-3}+c\omega_{-1})&\quad \text{for} \quad i+j =2n-1\geq 3,\enspace n\in\mathbb Z, \\
P_{-3,i+j-2}(c) (c\omega_{-3}+\omega_{-1})&\quad \text{for} \quad i+j =-2n+1\leq - 3, n\in\mathbb Z, \\
P_{-4,|i+j|-2}(c) \omega_{-4} +P_{-2,|i+j|-2}(c)\omega_{-2}&\quad\text{for}\quad |i+j| =2n \geq 2, n\in\mathbb Z. \\
\end{cases}
\end{equation}
The universal central extension of the Date-Jimbo-Kashiwara-Miwa  algebra is the $\mathbb Z_2$-graded Lie algebra 
$$
\widehat{\mathfrak g}=\widehat{\mathfrak g}^0\oplus \widehat{\mathfrak g}^1,
$$
where
\begin{align*}
\widehat{\mathfrak g}^0&=\left(\mathfrak g\otimes \mathbb C[t,t^{-1}]\right)\oplus \mathbb C\omega_{0}, \\ \widehat{\mathfrak g}^1&=\left(\mathfrak g\otimes \mathbb C[t,t^{-1}]u\right)\oplus \mathbb C\omega_{-4}\oplus \mathbb C\omega_{-3}\oplus \mathbb C\omega_{-2}\oplus \mathbb C\omega_{-1}
\end{align*}
with bracket
\begin{align*}
[x\otimes t^i,y\otimes t^j]&=[x,y]\otimes t^{i+j}+\delta_{i+j,0}j(x,y)\omega_0, \\ \\
[x\otimes t^{i-1}u,y\otimes t^{j-1}u]&=[x,y]\otimes (t^{i+j+2}-2ct^{i+j}+t^{i+j-2}) \\
 &\quad +\left(\delta_{i+j,-2}(j+1) -2cj\delta_{i+j,0} +(j-1)\delta_{i+j,2}\right)(x,y)\omega_0, \\ \\
[x\otimes t^{i-1}u,y\otimes t^{j}]&=[x,y]u\otimes t^{i+j-1}+ j(x,y)\psi_{ij}(c).
\end{align*}

\end{theorem}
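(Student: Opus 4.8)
The plan is to specialize the Kassel--Loday formula $[x\otimes f,y\otimes g]=[x,y]\otimes fg+(x,y)\overline{f\,dg}$ recalled at the start of this section to the monomial basis of $R=\mathbb C[t,t^{-1},u\mid u^2=t^4-2ct^2+1]$, and to read off the three displayed brackets according to the $\mathbb Z_2$-grading in which $t$ is even and $u$ is odd. For each pair of generators the Lie part $[x,y]\otimes fg$ is immediate once one multiplies out $fg$ in $R$ using $u^2=t^4-2ct^2+1$, so the entire content of the theorem is the evaluation of the central term $\overline{f\,dg}\in\Omega^1_R/dR$ in the basis $\{\omega_0,\omega_{-1},\omega_{-2},\omega_{-3},\omega_{-4}\}$ furnished by the first Theorem above.

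For the even--even bracket I take $f=t^i$, $g=t^j$, so $f\,dg=jt^{i+j-1}\,dt$, and since $t^{m}\,dt$ is exact unless $m=-1$ this reduces to $j\delta_{i+j,0}\omega_0$. For the odd--odd bracket I take $f=t^{i-1}u$, $g=t^{j-1}u$; expanding $d(t^{j-1}u)$ and replacing $u\,du$ by $\tfrac12 d(u^2)=(2t^3-2ct)\,dt$ turns $f\,dg$ into the $\mathbb C[t,t^{-1}]$-combination $(j+1)t^{i+j+1}\,dt-2cj\,t^{i+j-1}\,dt+(j-1)t^{i+j-3}\,dt$, and again only the residue (the coefficient of $t^{-1}\,dt$) survives, producing the three Kronecker deltas at $i+j=-2,0,2$. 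Both cases are short and land entirely in the even part $\mathbb C\omega_0$ of $\Omega^1_R/dR$.

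The substantive case is the mixed bracket, where $f=t^{i-1}u$, $g=t^j$ give $f\,dg=j\,t^{i+j-2}u\,dt$, so everything reduces to expressing $\overline{t^k u\,dt}$ with $k=i+j-2$ in the chosen basis; this identification $\psi_{ij}(c)=\overline{t^{i+j-2}u\,dt}$ is precisely what must be proved. The four exceptional values $k=-1,-2,-3,-4$ are the basis elements $\omega_{-1},\dots,\omega_{-4}$ themselves and account for the first line in the definition of $\psi_{ij}$. For all remaining $k$ I invoke the preceding Lemma: specializing \eqnref{recursionreln} to $m=2$ and $p(t)=t^4-2ct^2+1$ (so $a_0=1$, $a_2=-2c$, $a_1=a_3=0$) and writing $k=i+3$ collapses it to $(6+2k)\overline{t^k u\,dt}=4kc\,\overline{t^{k-2}u\,dt}-2(k-3)\overline{t^{k-4}u\,dt}$, which is exactly the recursion defining the $P_k(c)$. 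Since this three-term recursion links only indices of the same parity, the span decouples: for even $k$ the form $\overline{t^k u\,dt}$ is a combination of $\omega_{-4},\omega_{-2}$ and for odd $k$ a combination of $\omega_{-3},\omega_{-1}$, with coefficients given by the four fundamental solutions singled out in the elliptic and Gegenbauer subsections (initial data $P_{-4},P_{-2},P_{-3},P_{-1}$ equal to a standard basis vector). Matching the explicit Gegenbauer coefficients of Cases 3 and 4 --- in particular the relation $P_{-1,2n-3}(c)=c\,P_{-3,2n-3}(c)$ --- collapses the odd case to the single multiple $P_{-3,i+j-2}(c)(\omega_{-3}+c\omega_{-1})$, and the even case to $P_{-4,|i+j|-2}(c)\omega_{-4}+P_{-2,|i+j|-2}(c)\omega_{-2}$.

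The main obstacle is the bookkeeping in this last step: one must verify that the coordinate sequences of $\overline{t^k u\,dt}$ genuinely coincide with the fundamental solutions $P_{-4,k},P_{-2,k},P_{-3,k},P_{-1,k}$ for \emph{all} $k$ and not merely at the seeds, and that the symmetry of these solutions under $k\mapsto -k-4$ is what converts the index $i+j-2$ into $|i+j|-2$ in the even case and, by interchanging the two odd basis indices $-3$ and $-1$, swaps the roles of $c$ and $1$ between the positive and negative odd cases. Establishing these parity- and sign-symmetries of the recursion, together with the explicit power-series evaluations recorded in the elliptic and Gegenbauer subsections, is the only delicate part; the remaining two brackets are the routine residue computations described above.
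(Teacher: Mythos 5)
Your proposal is correct and is essentially the intended proof: the paper states this theorem as a citation to \cite{MR2813377} rather than proving it, and its Section 2 --- Kassel's bracket $[x\otimes f,y\otimes g]=[x,y]\otimes fg+(x,y)\overline{f\,dg}$, the basis theorem for $\Omega_R^1/dR$, the recursion\eqnref{recursionreln} specialized to $m=2$, $p(t)=t^4-2ct^2+1$ (which collapses to $(6+2k)\overline{t^ku\,dt}=4kc\,\overline{t^{k-2}u\,dt}-2(k-3)\overline{t^{k-4}u\,dt}$, the defining recursion of the $P_k$), and the four fundamental solutions $P_{-4},P_{-3},P_{-2},P_{-1}$ --- is exactly the machinery you assemble, including the residue computations for the even--even and odd--odd brackets and the relation $P_{-1,2n-3}=cP_{-3,2n-3}$ read off from the Gegenbauer cases. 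The one step you leave as bookkeeping, the $k\mapsto -k-4$ symmetry converting $i+j-2$ into $|i+j|-2$, is filled by the automorphism $t\mapsto t^{-1}$, $u\mapsto ut^{-2}$ of $R$, which sends $\overline{t^ku\,dt}$ to $-\overline{t^{-k-4}u\,dt}$ and hence swaps $\omega_{-1}\leftrightarrow-\omega_{-3}$ while fixing the even-index coordinates up to sign, so nothing essential is missing.
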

The theorem above is similar to the results that M.  Bremner obtained for the elliptic and four point affine Lie algebra cases (\cite[Theorem 4.6]{MR1303073} and \cite[Theorem 3.6]{MR1249871} respectively) and with the isomorphism obtained for the three point algebra given in \cite{CoxJur}.

\begin{theorem}\label{3ptthm}
The universal central extension of the algebra $\mathfrak{sl}(2,\mathbb C)\otimes \mathcal R$ is isomorphic to the Lie algebra with generators $e_n$, $e_n^1$, $f_n$, $f_n^1$, $h_n$, $h_n^1$, $n\in\mathbb Z$, $\omega_0$, $\omega_{-1}$, $\omega_{-2}$, $\omega_{-3}$, $\omega_{-4}$ and relations given by
\begin{align}
[x_m,x_n]&:=[x_m,x_n^1]=[x_m^1,x_n^1]=0,\quad \text{ for }x=e,f\label{xs}\\
[h_m,h_n]&:=-2m\delta_{m,-n}\omega_0=(n-m)\delta_{m,-n}\omega_0 , \\
[h^1_m,h^1_n]&:=2\left((n+2)\delta_{m+n,-4}-2c(n+1)\delta_{m+n,-2}+n\delta_{m+n,0}\right)\omega_0,\\
[h_m,h_n^1]&:=-2m\psi_{mn}(c),\\
 [\omega_i,x_m]&=[\omega_i,\omega_j]=0,\quad \text{ for }x=e,f,h,\quad i,j\in\{0,1\} \label{omega1} \\ 
[e_m,f_n]&=h_{m+n}-m\delta_{m,-n}\omega_0, \label{ef}\\
[e_m,f_n^1]&=h^1_{m+n}-m\psi_{mn}(c)=:[e_m^1,f_n], \\
[e_m^1,f_n^1]&:=h_{m+n+4}-2ch_{m+n+2}+h_{m+n}\\
&\quad+\left((n+2)\delta_{m+n,-4}-2c(n+1)\delta_{m+n,-2}+n\delta_{m+n,0}\right)\omega_0,\notag  \\
[h_m,e_n]&:=2e_{m+n}, \\
[h_m,e^1_n]&:=2e^1_{m+n} = :[h_m^1,e_m],  \\
[h_m^1,e_n^1]&:=2e_{m+n+4}-4ce_{m+n+2}+2e_{m+n},\label{he}\\
[h_m,f_n]&:=-2f_{m+n}, \\
[h_m,f^1_n]&:=-2f^1_{m+n} =:[h_m^1,f_m], \\
[h_m^1,f_n^1]&:=-2f_{m+n+4}+4cf_{m+n+2}-2f_{m+n} ,\label{last}
\end{align}
for all $m,n\in\mathbb Z$.
\end{theorem}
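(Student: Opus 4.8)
The plan is to read Theorem~\ref{3ptthm} as the specialization to $\mathfrak g=\mathfrak{sl}(2,\mathbb C)$ of the preceding structure theorem for $\widehat{\mathfrak g}=(\mathfrak g\otimes \mathcal R)\oplus\Omega_{\mathcal R}^1/d\mathcal R$, so that the task reduces to naming a basis, fixing the invariant form, and matching every bracket. First I would fix the standard basis $e,f,h$ of $\mathfrak{sl}(2,\mathbb C)$ with $[h,e]=2e$, $[h,f]=-2f$, $[e,f]=h$, and take $(\,\cdot\,|\,\cdot\,)$ to be the trace form, normalized so that $(h|h)=2$, $(e|f)=(f|e)=1$ and all other pairings vanish; this is $\tfrac14$ of the Killing form, and the numbers $2$ and $1$ it produces are precisely the ones sitting in front of the cocycle terms in the relations. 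I would then set $x_n:=x\otimes t^n$ and $x_n^1:=x\otimes t^nu$ for $x\in\{e,f,h\}$, together with $\omega_0,\dots,\omega_{-4}$ as in the preceding theorem. By that theorem together with the basis theorem for $\Omega_{\mathcal R}^1/d\mathcal R$, these elements form a basis of $\widehat{\mathfrak g}$, so it suffices to check that their brackets are exactly the listed relations.

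Next I would verify the relations \eqref{xs}--\eqref{last} one family at a time by substituting these basis vectors into the three master bracket formulas of the preceding theorem. The crucial bookkeeping point is that a factor $t^nu$ corresponds to the index $i=n+1$ in any formula written with $t^{i-1}u$; with this substitution the $u$--$u$ bracket taken with $x=y=h$ yields the coefficients $(n+2),(n+1),n$ of $\omega_0$, and taken with $x=e,\ y=f$ it yields the spread $h_{m+n+4}-2ch_{m+n+2}+h_{m+n}$ directly out of $u^2=t^4-2ct^2+1$, matching \eqref{he} and \eqref{last} and the $[e_m^1,f_n^1]$ line. The pure $t^{\bullet}$ brackets give \eqref{xs}--\eqref{ef} at once, the central term $n(x|y)\delta_{m+n,0}\omega_0$ reproducing $-2m\,\omega_0$ for $[h_m,h_n]$ and $-m\,\omega_0$ for $[e_m,f_n]$. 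Every mixed bracket of the form $[h_m,e_n^1]$, $[h_m,f_n^1]$ and their $h^1$ partners involves $(h|e)=(h|f)=0$, so no K\"ahler term survives and these follow immediately; the relations \eqref{omega1} hold because the $\omega_i$ lie in the centre.

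The main obstacle is the $\psi_{mn}(c)$ terms, that is, the mixed brackets $[h_m,h_n^1]$ and $[e_m,f_n^1]$, $[e_m^1,f_n]$, which are the only ones where the invariant form is nonzero on a $u$--$t$ pairing. Here I would compute the K\"ahler differential directly from $[x\otimes F,y\otimes G]=[x,y]\otimes FG+(x|y)\,\overline{F\,dG}$, and use the identity $\overline{F\,dG}=-\overline{G\,dF}$ in $\Omega_{\mathcal R}^1/d\mathcal R$ to collapse each such term to a multiple of $\overline{t^{\,m+n-1}u\,dt}$, which I then identify with the value $\psi_{ij}(c)=\overline{t^{\,i+j-2}u\,dt}$. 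This is exactly where the subscript of $\psi$ must be tracked with care, since $\psi_{ij}$ depends only on $i+j$ and the $t^{i-1}u$ convention shifts the relevant index sum by one relative to the generator labels $m,n$. Re-expressing $\overline{t^{k}u\,dt}$ in the basis $\{\omega_0,\dots,\omega_{-4}\}$ for large $|k|$ is precisely the content of the recursion \eqref{recursionreln} and of the four families $P_{-1},P_{-2},P_{-3},P_{-4}$ entering the definition of $\psi_{ij}(c)$, so this step is inherited from the cited results rather than reproved.

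Finally I would argue that the relations genuinely \emph{present} $\widehat{\mathfrak g}$. Since every bracket of two generators has now been written as an explicit linear combination of generators, the abstract Lie algebra $\mathfrak L$ defined by the listed generators and relations is spanned, as a vector space, by those generators, whence $\dim\mathfrak L\le\dim\widehat{\mathfrak g}$. The relation check of the previous paragraphs furnishes a Lie algebra homomorphism $\mathfrak L\to\widehat{\mathfrak g}$ that is surjective because the $x_n,x_n^1,\omega_i$ span $\widehat{\mathfrak g}$, and comparison with the basis above upgrades it to an isomorphism. The only genuinely delicate computations are the cocycle coefficients and the $\psi$-indexing of the third paragraph; the rest is forced by the $\mathfrak{sl}(2,\mathbb C)$ structure constants and the normalization of the form.
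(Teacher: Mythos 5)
Your proposal is correct and takes essentially the same route as the paper: the paper likewise forms the Lie algebra presented by these generators and relations, maps it onto $(\mathfrak{sl}(2,\mathbb C)\otimes\mathcal R)\oplus(\Omega_{\mathcal R}/d\mathcal R)$ by the obvious surjective homomorphism $\phi$, shows the presented algebra is spanned by its generators because every bracket of two generators is again a linear combination of generators (organized there via the decomposition $S=S_-+S_0+S_+$), and concludes that $\phi$ sends this spanning set onto a basis, hence is an isomorphism. Your added care with the normalization of the form ($(h|h)=2$, $(e|f)=1$) and the $\psi$-index shift coming from the $t^{i-1}u$ convention simply makes explicit what the paper inherits from its preceding structure theorem.
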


\begin{proof}  Let $\mathfrak f$ denote the free Lie algebra with the generators  $e_n$, $e_n^1$, $f_n$, $f_n^1$, $h_n$, $h_n^1$, $n\in\mathbb Z$, $\omega_0$, $\omega_{-1}$, $\omega_{-2}$, $\omega_{-3}$, $\omega_{-4}$ and relations given above \eqnref{xs}-\eqnref{last}.  The map
$\phi:\mathfrak f\to (\mathfrak{sl}(2,\mathbb C)\otimes \mathcal R)\oplus ( \Omega_{\mathcal R}/d\mathcal R)$ given by 
\begin{align*}
\phi(e_n):&=e\otimes t^n,\quad \phi(e_n^1)=e\otimes ut^n, \\
\phi(f_n):&=f\otimes t^n,\quad \phi(f_n^1)=f\otimes ut^n, \\
\phi(h_n):&=h\otimes t^n,\quad \phi(h_n^1)=h\otimes ut^n, \\
\phi(\omega_0):&=\overline{t^{-1}\,dt},\quad \phi(\omega_{-1})=\overline{t^{-1}u\,dt}, \\
\phi(\omega_{-2}):&=\overline{t^{-2}u\,dt},\quad \phi(\omega_{-3})=\overline{t^{-3}u\,dt}, \\
\phi(\omega_{-4}):&=\overline{t^{-4}u\,dt},\\
\end{align*}
for $n\in\mathbb Z$ is a surjective Lie algebra homomorphism.  

 Consider the subalgebras 
 \begin{align*}
 S_+&=\langle e_n,e_n^1\,|\,n\in\mathbb Z\rangle  \\
 S_0&=\langle h_n,h_n^1,\omega_0,\omega_{-1},\omega_{-2},\omega_{-3},\omega_{-4}\,|\,n\in\mathbb Z\rangle  \\
S_-&=\langle f_n,f_n^1\,|\,n\in\mathbb Z\rangle
\end{align*}
 where $\langle X\enspace\rangle$ means spanned by the set $X$ and set $S=S_-+S_0+S_+$.   By \eqnref{xs} -\eqnref{omega1} we have
$$
S_+=\sum_{n\in\mathbb Z}\mathbb Ce_n+\sum_{n\in\mathbb Z}\mathbb Ce_n^1,\quad S_-=\sum_{n\in\mathbb Z}\mathbb C f_n+\sum_{n\in\mathbb Z}\mathbb Cf_n^1
$$
$$
S_0=\sum_{n\in\mathbb Z}\mathbb C h_n+\sum_{n\in\mathbb Z}\mathbb Ch_n^1+\mathbb C\omega_0+\mathbb C\omega_{-1}+\mathbb C\omega_{-2}+\mathbb C\omega_{-3}+\mathbb C\omega_{-4}
$$
By \eqnref{ef}-\eqnref{he} we see that 
\begin{gather*}
[e_n,S_+]=[e_n^1,S_+]=0,\quad [h_n,S_+]\subseteq S_+,\quad [h_n^1,S_+]\subseteq S_+,\\ [f_n,S_+]\subseteq S_0,\quad [f_n^1,S_+]\subseteq S_0.
\end{gather*}
and similarly 
$[x_n,S_-]=[x_n^1,S_-] \subseteq  S$, $[x_n,S_0]=[x_n^1,S_0] \subseteq  S$ for $x=e,f,h$. 
To sum it up we observe that $[x_n,S]\subseteq S$ and $[x_n^1,S]\subseteq S$ for $n\in\mathbb Z$, $x=h,e,f$.
Thus $[S,S]\subset S$.
Hence $S$ contains the generators of $\mathfrak f$ and is a subalgebra.  Hence $S=\mathfrak f$.  One can now see that $\phi$ is a Lie algebra isomorphism.
\end{proof}

\section{A triangular decomposition of DJKM loop algebras $\mathfrak g\otimes R$}

From now on we identify $R_a$ with $\mathcal S$ and set $R=\mathcal S$ which has a basis $t^i,t^iu$, $i\in\mathbb Z$.  Let $p:R\to R$ be the automorphism given by $p(t)=t$ and $p(u)=-u$.   Then one can decompose $R=R^0\oplus R^1$ where $R^0=\mathbb C[t^{\pm1}]=\{r\in R\,|\, p(r)=r\}$ and
$R^1=\mathbb C[t^{\pm1}]u=\{r\in R\,|\, p(r)=-r\}$ are the eigenspaces of $p$.
From now on $\mathfrak g$ will denote a simple Lie algebra over $\mathbb C$ with triangular decomposition $\mathfrak g=\mathfrak n_-\oplus \mathfrak h\oplus\mathfrak n_+$ and then the {\it DJKM loop algebra} $L({\mathfrak g}):=\mathfrak g\otimes R$ has a corresponding $\mathbb Z/2\mathbb Z$-grading:  $L({\mathfrak g})^i:=\mathfrak g\otimes R^i$ for $i=0,1$.  However the degree of $t$ does not render $L({\mathfrak g})$ a $\mathbb Z$-graded Lie algebra.  This leads one to the following notion.

Suppose $I$ is an additive subgroup of the rational numbers $\mathbb P$ and $\mathcal A$ is a $\mathbb C$-algebra such that $\mathcal A=\oplus_{i\in I}\mathcal A_i$  and there exists a fixed $l\in\mathbb N$, with
$$
\mathcal A_i\mathcal A_j\subset \oplus_{|k-(i+j)|\leq l}\mathcal A_k
$$
for all $i,j\in\mathbb Z$.  Then $\mathcal A$ is said to be an {\it $l$-quasi-graded algebra}.  For $0\neq x\in \mathcal A_i$ one says that $x$ is {\it homogeneous of degree $i$} and one writes $\deg x=i$.

  For example $R$ has the structure of a $1$-quasi-graded algebra where $I=\frac{1}{2}\mathbb Z$ and $\deg t^i=i$, $\deg t^iu=i+\frac{1}{2}$.

A {\it weak triangular decomposition} of a Lie algebra $\mathfrak l$ is a triple $(\mathfrak H,\mathfrak l_+,\sigma)$ satisfying
\begin{enumerate}
\item $\mathfrak H$ and $\mathfrak l_+$ are subalgebras of $\mathfrak l$,
\item $\mathfrak H$ is abelian and $[\mathfrak H,\mathfrak l_+]\subset \mathfrak l_+$,
\item $\sigma$ is an anti-automorphism of $\mathfrak l$ of order $2$ which is the identity on $\mathfrak h$ and 
\item $\mathfrak l=\mathfrak l_+\oplus \mathfrak H\oplus\sigma( \mathfrak l_+)$.
\end{enumerate}
We will let $\sigma(\mathfrak l_+)$ be denoted by $\mathfrak l_-$.

\begin{theorem} The DJKM loop algebra $L({\mathfrak g})$ is $1$-quasi-graded Lie algebra where $\deg (x\otimes f)=\deg f$ for $f$ homogeneous.  Set $R_+=\mathbb C(1+u)\oplus \mathbb C[t,u]t$ and $R_-=p(R_+)$.  Then $L({\mathfrak g})$ has a weak triangular decomposition given by
$$
L({\mathfrak g})_\pm=\mathfrak g\otimes R_\pm,\quad \mathcal H:=\mathfrak h\otimes \mathbb C.
$$
\end{theorem}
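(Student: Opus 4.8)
The plan is to verify, in order, that $L(\mathfrak g)=\mathfrak g\otimes\mathcal S$ carries the asserted $1$-quasi-grading and that the triple $(\mathcal H,L(\mathfrak g)_+,\sigma)$ satisfies the four axioms of a weak triangular decomposition, for a suitable anti-automorphism $\sigma$. For the grading I would simply transport the $1$-quasi-grading of $\mathcal S$ up through the tensor factor $\mathfrak g$. Since $[\,\mathfrak g\otimes f,\mathfrak g\otimes g\,]\subseteq\mathfrak g\otimes fg$, it is enough to see that the product of two homogeneous elements of $\mathcal S$ lands within degree-distance $1$ of the sum of their degrees. The only product that is not exactly homogeneous is $t^{i}u\cdot t^{j}u=t^{i+j}u^{2}=t^{i+j+2}-2bt^{i+j+1}+t^{i+j}$, using the defining quadratic relation $u^{2}=t^{2}-2bt+1$ of $\mathcal S$; its three terms have degrees $i+j+2,\ i+j+1,\ i+j$, each within $1$ of $\deg(t^{i}u)+\deg(t^{j}u)=i+j+1$, while $t^{i}t^{j}$ and $t^{i}\cdot t^{j}u$ are exactly homogeneous. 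Hence $L(\mathfrak g)_k:=\mathfrak g\otimes\mathcal S_k$ gives $[L(\mathfrak g)_i,L(\mathfrak g)_j]\subseteq\bigoplus_{|k-(i+j)|\le 1}L(\mathfrak g)_k$, with $\deg(x\otimes f)=\deg f$.

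Next I would produce $\sigma$ by tensoring the Chevalley anti-involution $\tau$ of $\mathfrak g$ (which fixes $\mathfrak h$ pointwise and interchanges $\mathfrak g_{\alpha}\leftrightarrow\mathfrak g_{-\alpha}$) with the automorphism $p$, setting $\sigma=\tau\otimes p$. Because $\mathcal S$ is commutative and $\tau$ reverses brackets, a one-line check gives $\sigma([x\otimes f,y\otimes g])=[\sigma(y\otimes g),\sigma(x\otimes f)]$, so $\sigma$ is an anti-automorphism; it is of order $2$ since $\tau^{2}=\mathrm{id}$ and $p^{2}=\mathrm{id}$, and it fixes $\mathcal H=\mathfrak h\otimes\mathbb C$ pointwise because $\tau|_{\mathfrak h}=\mathrm{id}$ and $p(1)=1$, which settles axiom (3) and yields $\sigma(L(\mathfrak g)_{+})=\mathfrak g\otimes p(R_{+})=\mathfrak g\otimes R_{-}=L(\mathfrak g)_{-}$. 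Axioms (1)–(2) are then structural: $R_{+}$ is exactly the kernel of the evaluation homomorphism $\mathbb C[t,u]\to\mathbb C$, $t\mapsto 0,\ u\mapsto -1$ (indeed $1+u$ and every element of $\mathbb C[t,u]t$ vanish there), hence an ideal of $\mathbb C[t,u]$ with $R_{+}R_{+}\subseteq R_{+}$, so $L(\mathfrak g)_{+}=\mathfrak g\otimes R_{+}$ is a subalgebra; and $\mathcal H$ is abelian with $[\mathcal H,L(\mathfrak g)_{+}]=[\mathfrak h,\mathfrak g]\otimes R_{+}\subseteq L(\mathfrak g)_{+}$.

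The real content is axiom (4), the direct-sum decomposition $L(\mathfrak g)=L(\mathfrak g)_{+}\oplus\mathcal H\oplus L(\mathfrak g)_{-}$. I would reduce it to a vector-space statement about $\mathcal S$ tracked against $\mathfrak g=\mathfrak n_{-}\oplus\mathfrak h\oplus\mathfrak n_{+}$: on the $\mathfrak h$-component one needs $\mathcal S=R_{+}\oplus\mathbb C\oplus R_{-}$, whereas on each $\mathfrak n_{\pm}$-component the constant functions must be supplied by $R_{+}\oplus R_{-}$ alone. The pivotal elements are $1+u\in R_{+}$ and $1-u=p(1+u)\in R_{-}$; together with $1\in\mathbb C$ they must reproduce the low-degree slice spanned by $1$ and $u$ with no redundancy, after which one verifies that the higher-order part $\mathbb C[t,u]t$ and its image under the involution account for the remaining basis vectors $t^{i},t^{i}u$. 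I would carry this out as explicit book-keeping on the basis $\{t^{i},t^{i}u\}$, rewriting $u=(1+u)-1$ and its analogues to place each basis vector in a single summand.

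The step I expect to be the main obstacle is precisely this verification of axiom (4). Unlike the classical affine case, the relevant involution does not respect the naive $\mathbb Z$-grading by powers of $t$, so one cannot simply read off a positive and a negative half; showing that $R_{+}\oplus\mathbb C\oplus R_{-}$ is simultaneously direct and exhaustive, \emph{and} that it is compatible with the splitting of $\mathfrak g$ at degree zero, rests entirely on how the twisted elements $1\pm u$ redistribute the degree-zero and half-integer-degree vectors. Pinning down this redistribution, rather than any bracket computation, is the crux of the argument.
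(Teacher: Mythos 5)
Your sketch does contain two sound observations: that $R_+$ is the kernel of the evaluation $\mathbb C[t,u]\to\mathbb C$, $t\mapsto 0$, $u\mapsto -1$, hence an ideal, so $L(\mathfrak g)_+$ is a subalgebra; and that the whole theorem reduces to linear algebra in the ring tracked against $\mathfrak g=\mathfrak n_-\oplus\mathfrak h\oplus\mathfrak n_+$. But the step you defer as ``book-keeping'' --- axiom (4) --- is not merely unfinished: with the ingredients you chose it is \emph{false}. You build $\sigma=\tau\otimes p$ from the involution $p(t)=t$, $p(u)=-u$ that the paper introduced to split $R$ into even and odd parts. That $p$ fixes $\mathbb C[t,u]t$ setwise, so $R_-=p(R_+)=\mathbb C(1-u)\oplus\mathbb C[t,u]t$; consequently $R_+\cap R_-\supseteq\mathbb C[t,u]t\neq 0$, and $R_++\mathbb C+R_-$ contains no negative power of $t$ at all. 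Hence $L(\mathfrak g)_+\cap\sigma(L(\mathfrak g)_+)\neq 0$ and $L(\mathfrak g)_++\mathcal H+\sigma(L(\mathfrak g)_+)\subsetneq L(\mathfrak g)$. Your claim that $1+u$, $1-u$ and $1$ span the low-degree slice ``with no redundancy'' is false on its face, since $(1+u)+(1-u)=2\cdot 1$. The anti-involution that can produce a triangular decomposition must exchange positive and negative powers of $t$, as the affine Chevalley anti-involution $e\otimes t^n\mapsto f\otimes t^{-n}$ does; concretely one needs a ring involution such as $t\mapsto t^{-1}$, $u\mapsto -t^{-2}u$ (which preserves $u^2=t^4-2ct^2+1$). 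This is the route taken in the proof the paper actually invokes: its ``proof'' of this theorem is nothing but a citation of Theorem 2.1 of Bremner's four-point paper, whose argument is of exactly this form.

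Two further problems are glossed over. First, your own reduction demands simultaneously that the ring equal $R_+\oplus R_-$ (so the $\mathfrak n_\pm$-components of the constants are covered) and that it equal $R_+\oplus\mathbb C\oplus R_-$ (directness on the $\mathfrak h$-component); these are mutually exclusive, so \emph{no} choice of $R_\pm$ can verify axiom (4) for $L(\mathfrak g)_\pm=\mathfrak g\otimes R_\pm$ and $\mathcal H=\mathfrak h\otimes\mathbb C$ exactly as printed --- the decomposition only has a chance if the halves are enlarged, e.g.\ to $(\mathfrak n_\pm\otimes\mathbb C)\oplus(\mathfrak g\otimes R_\pm)$ as in the affine, three-point and four-point settings; discovering and repairing this is part of the real content here, and your outline instead treats the contradiction as achievable book-keeping. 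Second, your grading computation uses the quadratic relation $u^2=t^2-2bt+1$, but the DJKM ring satisfies $u^2=t^4-2ct^2+1$; then $t^iu\cdot t^ju=t^{i+j+4}-2ct^{i+j+2}+t^{i+j}$, whose top term sits at distance $3$ from $\deg(t^iu)+\deg(t^ju)=i+j+1$, so the stated degrees give only a $3$-quasi-grading. (This last defect is inherited from the paper's own text, which carried the four-point example over unchanged, but a proof for the DJKM algebra must rescale the degrees or weaken the constant $l$, and yours does neither.)
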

\begin{proof}  This is essentially the same proof as \cite{MR1249871}, Theorem 2.1 and so will be omitted.
\end{proof}

\subsection{Formal Distributions}
We need some more notation that will make some of the arguments
later more transparent.
Our notation follows roughly \cite{MR99f:17033} and \cite
{MR2000k:17036}:  The {\it formal (Dirac) delta function}
$\delta(z/w)$ is the formal distribution
$$
\delta(z/w)=z^{-1}\sum_{n\in\mathbb Z}z^{-n}w^{n}=w^{-1}\sum_{n\in\mathbb Z}z^{n}w^{-n}.
$$
For any sequence of elements $\{a_{m}\}_{m\in
\mathbb Z}$ in the ring $\End (V)$, $V$ a vector space,  the
{\it formal distribution}
\begin{align*}
a(z):&=\sum_{m\in\mathbb Z}a_{m}z^{-m-1}
\end{align*}
is called a {\it field}, if for any $v\in V$, $a_{m}v=0$ for $m\gg0$.
If $a(z)$ is a field, then we set
\begin{align}\label{usualnormalordering}
    a(z)_-:&=\sum_{m\geq 0}a_{m}z^{-m-1},\quad\text{and}\quad
   a(z)_+:=\sum_{m<0}a_{m}z^{-m-1}.
\end{align}
 The {\it normal ordered product} of two distributions
$a(z)$ and
$b(w)$ (and their coefficients) is
defined by
\begin{equation}\label{normalorder}
\sum_{m\in\mathbb Z}\sum_{n\in\mathbb
Z}:a_mb_n:z^{-m-1}w^{-n-1}=:a(z)b(w):=a(z)_+b(w)+b(w)a(z)_-.
\end{equation}

%
Then one defines recursively
\[
:a^1(z_1)\cdots a^k(z_k):=:a^1(z_1)\left(:a^2(z_2)\left(:\cdots
:a^{k-1}(z_{k-1}) a^k(z_k):\right)\cdots
:\right):,
\]
while normal ordered product
\[
:a^1(z)\cdots
a^k(z):=\lim_{z_1,z_2,\cdots, z_k\to
z} :a^1(z_1)\left(:a^2(z_2)\left(:\cdots :a^{k-1}(z_{k-1})
a^k(z_k):\right)\cdots
\right):
\]
will only be defined for certain $k$-tuples $(a^1,\dots,a^k)$.

Set 
\begin{equation}\label{contraction}
\lfloor
ab\rfloor=a(z)b(w)-:a(z)b(w):= [a(z)_-,b(w)],
\end{equation}
which is called the {\it contraction} of the two formal distributions 
$a(z)$ and $b(w)$. 
\begin{theorem}[Wick's Theorem, \cite{MR85g:81096}, \cite{MR99m:81001} or  
\cite{MR99f:17033} ]  Let  $a^i(z)$ and $b^j(z)$ be formal
distributions with coefficients in the associative algebra 
 $\End(\mathbb C[\mathbf x]\otimes \mathbb C[\mathbf y])$, 
 satisfying
\begin{enumerate}
\item $[ \lfloor a^i(z)b^j(w)\rfloor ,c^k(x)_\pm]=[ \lfloor
a^ib^j\rfloor ,c^k(x)_\pm]=0$, for all $i,j,k$ and
$c^k(x)=a^k(z)$ or
$c^k(x)=b^k(w)$.
\item $[a^i(z)_\pm,b^j(w)_\pm]=0$ for all $i$ and $j$.
\item The products 
$$
\lfloor a^{i_1}b^{j_1}\rfloor\cdots
\lfloor a^{i_s}b^{i_s}\rfloor:a^1(z)\cdots a^M(z)b^1(w)\cdots
b^N(w):_{(i_1,\dots,i_s;j_1,\dots,j_s)}
$$
have coefficients in
$\End(\mathbb C[\mathbf x]\otimes \mathbb C[\mathbf y])$ for all subsets
$\{i_1,\dots,i_s\}\subset \{1,\dots, M\}$, $\{j_1,\dots,j_s\}\subset
\{1,\cdots N\}$. Here the subscript
${(i_1,\dots,i_s;j_1,\dots,j_s)}$ means that those factors $a^i(z)$,
$b^j(w)$ with indices
$i\in\{i_1,\dots,i_s\}$, $j\in\{j_1,\dots,j_s\}$ are to be omitted from
the product
$:a^1\cdots a^Mb^1\cdots b^N:$ and when $s=0$ we do not omit
any factors.
\end{enumerate}
Then
\begin{align*}
:&a^1(z)\cdots a^M(z)::b^1(w)\cdots
b^N(w):= \\
  &\sum_{s=0}^{\min(M,N)}\sum_{i_1<\cdots<i_s,\,
j_1\neq \cdots \neq j_s}\lfloor a^{i_1}b^{j_1}\rfloor\cdots
\lfloor a^{i_s}b^{j_s}\rfloor
:a^1(z)\cdots a^M(z)b^1(w)\cdots
b^N(w):_{(i_1,\dots,i_s;j_1,\dots,j_s)}.
\end{align*}
\end{theorem}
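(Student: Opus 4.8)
The plan is to prove the identity by induction on $M$, the number of factors in the first normal-ordered product, reducing everything to a single elementary contraction move. The engine of the argument is the recursive definition \eqnref{normalorder} of the normal-ordered product, which lets us peel off the leftmost field: $:a^1(z)\cdots a^M(z): = a^1(z)_+\,:a^2(z)\cdots a^M(z): + :a^2(z)\cdots a^M(z):\,a^1(z)_-$. Combined with hypothesis (2), which guarantees that like-signed parts commute, and hypothesis (1), which makes every contraction central, this reduces the whole computation to understanding how a single part $a^i(z)_-$ propagates to the right through a normal-ordered string of $b$'s.

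First I would establish the base case $M=1$, namely
\begin{align*}
a(z)\,:b^1(w)\cdots b^N(w): = \;&:a(z)b^1(w)\cdots b^N(w): \\
&+ \sum_{j=1}^N \lfloor ab^j\rfloor\;:a(z)b^1(w)\cdots \widehat{b^j(w)}\cdots b^N(w):,
\end{align*}
where the hat denotes omission. To see this, split $a(z)=a(z)_++a(z)_-$ via \eqnref{usualnormalordering}. The part $a(z)_+$ already stands to the left of the string and contributes to the fully normal-ordered $s=0$ term. For $a(z)_-$, I would commute it rightward through $:b^1(w)\cdots b^N(w):$; each time it crosses a factor $b^j(w)$ it produces $[a(z)_-,b^j(w)] = \lfloor ab^j\rfloor$ by \eqnref{contraction}, which by hypothesis (1) is central and may be pulled out of the string, leaving the product with $b^j$ deleted. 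Once $a(z)_-$ has passed all the way to the right, recombining it with $a(z)_+$ yields precisely $:a(z)b^1(w)\cdots b^N(w):$ by \eqnref{normalorder}, which establishes the base case.

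For the inductive step I would unwind the leftmost factor as above and write the product $:a^1(z)\cdots a^M(z):\,:b^1(w)\cdots b^N(w):$ as the sum $a^1(z)_+\bigl(:a^2\cdots a^M:\,:b^1\cdots b^N:\bigr) + \bigl(:a^2\cdots a^M:\,a^1(z)_-\bigr):b^1\cdots b^N:$. The inductive hypothesis applies to the $(M-1)$-fold product inside the first summand, while in the second summand I would use the base-case computation to move $a^1(z)_-$ across $:b^1(w)\cdots b^N(w):$, generating the contractions involving $a^1$, and then apply the inductive hypothesis to the remaining $(M-1)$-fold product. Assembling the two contributions should reproduce exactly the sum over subsets $i_1<\cdots<i_s$ and distinct indices $j_1\neq\cdots\neq j_s$: the ordering $i_1<\cdots<i_s$ arises because $a^1$ is always the first field offered up for contraction and sits leftmost, while the distinctness of the $j$'s reflects that each $b$-factor, once contracted away, is removed from the string.

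The main obstacle will be the combinatorial bookkeeping in the inductive step rather than any analytic difficulty: one must check that every contraction pattern in the target sum is produced exactly once, with the correct factor omitted, and that the two summands do not overcount the configurations in which $a^1$ is left uncontracted. Tracking this carefully, and verifying that the centrality from hypothesis (1) indeed lets all the accumulated contractions $\lfloor a^{i_k}b^{j_k}\rfloor$ be collected in front independently of the order in which they were produced, is the heart of the proof. Finally, hypothesis (3) is what makes the coincident-point specializations implicit in the nested normal orderings well defined, so that the formal identity of distributions descends to genuine fields with coefficients in $\End(\mathbb C[\mathbf x]\otimes \mathbb C[\mathbf y])$.
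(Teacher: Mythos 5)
The paper contains no proof of this statement to compare yours against: Wick's Theorem is quoted as a known result, with its proof deferred to the cited references \cite{MR85g:81096}, \cite{MR99m:81001}, \cite{MR99f:17033}. Judged on its own terms, your strategy --- induction on $M$, peeling off $a^1(z)$ via the recursive definition \eqnref{normalorder}, a one-field contraction lemma as the base case, and hypothesis (1) to collect contractions in front --- is the standard textbook argument, and your inductive bookkeeping (terms with $a^1$ uncontracted recombine through $a^1(z)_+X + Xa^1(z)_- = \,:a^1(z)X:$, terms with $a^1$ contracted against some $b^j$ account exactly for the patterns with $i_1=1$) does close up correctly.

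However, your base-case formula is wrong as written: in the contraction terms you keep $a(z)$ inside the normal ordering, writing $\lfloor ab^j\rfloor\,:a(z)b^1(w)\cdots \widehat{b^j(w)}\cdots b^N(w):$. The contracted field $a$ must be omitted together with $b^j$ --- this is precisely what the subscript $(i_1,\dots,i_s;j_1,\dots,j_s)$ in the statement prescribes, and it is what your own mechanism produces, since the commutator $[a(z)_-,b^j(w)]=\lfloor ab^j\rfloor$ consumes $a(z)_-$ in that term. The correct identity is
\[
a(z)\,:b^1(w)\cdots b^N(w):\;=\;:a(z)b^1(w)\cdots b^N(w):\;+\;\sum_{j=1}^N\lfloor ab^j\rfloor\,:b^1(w)\cdots\widehat{b^j(w)}\cdots b^N(w):\,.
\]
As written, your formula is inconsistent (each contraction term is quadratic in $a$ while the left-hand side is linear), and feeding it into your induction yields a final expression with spurious $a^{i_k}$ factors inside the normal orderings, which is not Wick's formula. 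A second, smaller gap: $:b^1(w)\cdots b^N(w):$ is not the plain string $b^1\cdots b^N$ but a nested sum of products of the parts $b^j(w)_\pm$ in various orders, so ``commuting $a(z)_-$ across each factor'' must be justified by the derivation property of $[a(z)_-,\,\cdot\,]$ together with hypothesis (2), which kills the $[a(z)_-,b^j(w)_-]$ terms and leaves $[a(z)_-,b^j(w)_+]=\lfloor ab^j\rfloor$, plus an inner induction on $N$; hypothesis (1) then lets you pull each contraction out to the front and reassemble what remains into $:b^1\cdots\widehat{b^j}\cdots b^N:$. With these repairs --- the corrected base case and the inner induction --- your argument becomes a complete and correct proof.
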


Setting $m=i-\frac{1}{2}$, $i\in\mathbb Z+\frac{1}{2}$ and $x\in\mathfrak g$, define $x_{m+\frac{1}{2}}=x\otimes t^{i-\frac{1}{2}}u=x^1_m$ and $x_m:=x\otimes t^m$.  Define
\begin{align*}
x^1(z)
&:=\sum_{m\in\mathbb Z}x_{m+\frac{1}{2}}z^{-m-1},\quad x(z):=\sum_{m\in\mathbb Z}x_{m}z^{-m-1}.
\end{align*}

The relations in \thmref{3ptthm} then can be rewritten succinctly  as
\begin{align}
[x(z),y(w)]
&= [xy](w)\delta(z/w)-(x,y)\omega_0\partial_w\delta(z/w), \label{r1} \\ 
[x^1(z),y^1(w)]
&= P(w)\left([x,y](w)\delta(z/w) -(x,y)\omega_0\partial_w\delta(z/w)\right)-\frac{1}{2}(x,y)(\partial P(w))\omega_0  \delta(z/w),  \label{r2} \\ 
[x(z),y^1(w)]
&=[x,y]^1(w)\delta(z/w) -(x,y)(\partial_w\psi(c,w)\delta(z/w)-w\psi(c,w)\partial_w\delta(z/w)) \label{r3}\\
&=[x^1(z),y(w)],\notag  
 \end{align}
where $x,y\in\{e,f,h\}$, $P(w)=w^4-2cw^2+1$ and $\psi(c,w)=\sum_{n\in\mathbb Z}\psi_n(c)w^n$ for $\psi_{i+j}(c):=\psi'_{ij}(c)$.

 \section{Oscillator algebras}
 \subsection{The $\beta-\gamma$ system}   The $\beta-\gamma$ is the infinite dimensional oscillator algebra $\hat{\mathfrak a}$ with generators $a_n,a_n^*,a^1_n,a^{1*}_n,\,n\in\mathbb Z$ together with $\mathbf 1$ satisfying the relations 
\begin{gather*}
[a_n,a_m]=[a_m,a_n^1]=[a_m,a_n^{1*}]=[a^*_n,a^*_m]=[a^*_n,a^1_m]=[a^*_n,a^{1*}_{m}]=0,\\
[a_n^{1},a_m^{1}]=[a_n^{1*},a_m^{1*}]=0=[\mathfrak a,\mathbf 1], \\
[a_n,a_m^*]=\delta_{m+n,0}\mathbf 1=[a^1_n,a_m^{1*}].
\end{gather*}
For  $c=a,a^1$ and respectively $X=x,x^1$ with $r=0$ or $r=1$, sets $\mathbb C[\mathbf x]:= \mathbb C[x_n,x_n^1\,|\,n\in\mathbb Z$ and define $\rho:\hat{\mathfrak a}\to \mathfrak{gl}(\mathbb C[\mathbf x])$ by
\begin{align}
\rho_r( c_{m}):&=\begin{cases}
  \partial/\partial
X_{m}&\quad \text{if}\quad m\geq 0,\enspace\text{and}\enspace  r=0
\\ X_{m} &\quad \text{otherwise},
\end{cases}\label{c}
 \\
\rho_r(c_{m}^*):&=
\begin{cases}X_{-m} &\enspace \text{if}\quad m\leq
0,\enspace\text{and}\enspace r=0\\ -\partial/\partial
X_{-m}&\enspace \text{otherwise}. \end{cases}\label{c*}
\end{align}
and $\rho_r(\mathbf 1)=1$.
These two representations can be constructed using induction:
For $r=0$ the representation 
$\rho_0$ is the
$\hat{\mathfrak a}$-module generated by $1=:|0\rangle$, where
$$
a_{m}|0\rangle=a^1_{m}|0\rangle=0,\quad m\geq  0,
\quad a_{m}^*|0\rangle= a_{m}^{1*}|0\rangle=0,\quad m>0.
$$
For $r=1$ the representation 
$\rho_1$ is the
$\hat{\mathfrak a}$-module generated by $1=:|0\rangle$, where
$$
\quad a_{m}^*|0\rangle= a_{m}^{1*}|0\rangle=0,\quad m\in\mathbb Z.
$$
If we write 
$$
 \alpha(z):=\sum_{n\in\mathbb Z}a_nz^{-n-1},\quad  \alpha^*(z):=\sum_{n\in\mathbb Z}a_n^*z^{-n},
$$
and
$$
 \alpha^1(z):=\sum_{n\in\mathbb Z}a^1_nz^{-n-1},\quad  \alpha^{1*}(z):=\sum_{n\in\mathbb Z}a^{1*}_nz^{-n},
$$
then 
\begin{align*}
[\alpha(z),\alpha(w)]&=[\alpha^*(z),\alpha^*(w)]=[\alpha^{1}(z),\alpha^{1}(w)]=[\alpha^{1*}(z),\alpha^{1*}(w)]=0  \\
[\alpha(z),\alpha^*(w)]&=[\alpha^1(z),\alpha^{1*}(w)]
    =\mathbf 1\delta(z/w).
\end{align*}
Corresponding to these two representations there are two possible normal orderings:  For $r=0$ we use the usual normal ordering given by \eqnref{usualnormalordering} and for $r=1$ we define the {\it natural normal ordering} to be 
\begin{alignat*}{2}
\alpha(z)_+&=\alpha(z),\quad &\alpha(z)_-&=0 \\
\alpha^1(z)_+&=\alpha^1(z),\quad &\alpha^1(z)_-&=0 \\
\alpha^*(z)_+&=0,\quad &\alpha^*(z)_-&=\alpha^*(z), \\
\alpha^{1*}(z)_+&=0,\quad &\alpha^{1*}(z)_-&=\alpha^{1*}(z) ,
\end{alignat*}

This means in particular that for $r=0$ we get 
\begin{align}
\lfloor \alpha(z)\alpha^*(w)\rfloor
&=\sum_{m\geq 0} \delta_{m+n,0}z^{-m-1}w^{-n}
=\delta_-(z/w)
=\ 
\,\iota_{z,w}\left(\frac{1}{z-w}\right)\\
\lfloor \alpha^*(z)\alpha(w)\rfloor
&
=-\sum_{m\geq 1} \delta_{m+n,0}z^{-m}
w^{-n-1}
=-\delta_+(w/z)=\,\iota_{z,w}\left(\frac{1}{w-z}
\right)
\end{align}
(where $\iota_{z,w}$ Taylor series expansion in the `` region'' $|z|>|w|$), 
and for $r=1$ 
\begin{align}
\lfloor \alpha\alpha^*\rfloor
&=[\alpha(z)_-,\alpha^*(w)]=0 \\
\lfloor \alpha^*\alpha\rfloor
&=[\alpha^*(z)_-,\alpha(w)]=
-\sum_{\in\mathbb Z} \delta_{m+n,0}z^{-m}
w^{-n-1}
=- \delta(w/z),
\end{align}
where similar results hold for $\alpha^1$.
Notice that in both cases we have
$$
[\alpha(z),\alpha^*(w)]=
\lfloor \alpha(z)\alpha^*(w)\rfloor-\lfloor\alpha^*(w) \alpha(z)\rfloor=\delta(z/w).
$$

The following two Theorems are needed for the proof of our main result:
\begin{theorem}[Taylor's Theorem, \cite{MR99f:17033}, 2.4.3]
\label{Taylorsthm}  Let
$a(z)$ be a formal distribution.  Then in the region $|z-w|<|w|$,
\begin{equation}
a(z)=\sum_{j=0}^\infty \partial_w^{(j)}a(w)(z-w)^j.
\end{equation}
\end{theorem}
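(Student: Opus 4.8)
The plan is to reduce the statement to a single monomial and then to a binomial expansion, exploiting $\mathbb{C}$-linearity in the coefficients $a_m$. Since every formal distribution is $a(z)=\sum_{m\in\mathbb{Z}}a_m z^{-m-1}$, it suffices to verify the identity when $a(z)=z^{-m-1}$ for a fixed $m$, and then reassemble; throughout I write $\partial_w^{(j)}=\frac{1}{j!}\partial_w^{j}$ for the divided power, so that the asserted right-hand side is literally $\sum_{j\geq 0}\partial_w^{(j)}a(w)\,(z-w)^j$.

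First I would record the elementary derivative formula. For every integer $n$ one has $\partial_w^{(j)}w^{n}=\binom{n}{j}w^{n-j}$, where $\binom{n}{j}=\frac{n(n-1)\cdots(n-j+1)}{j!}$, which is immediate from $\partial_w^{j}w^{n}=n(n-1)\cdots(n-j+1)w^{n-j}$. Specializing to $n=-m-1$ gives
\[
\partial_w^{(j)}w^{-m-1}=\binom{-m-1}{j}w^{-m-1-j},
\]
so the candidate right-hand side for the monomial $a(z)=z^{-m-1}$ is $\sum_{j\geq 0}\binom{-m-1}{j}w^{-m-1-j}(z-w)^j$.

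Next I would identify this series as the expansion of $z^{-m-1}$ in the prescribed region. Writing $z=w+(z-w)$ and expanding $(w+(z-w))^{-m-1}=w^{-m-1}\bigl(1+(z-w)/w\bigr)^{-m-1}$ as a binomial series in the small quantity $(z-w)/w$ --- which is exactly the expansion valid for $|z-w|<|w|$, i.e.\ the application of $\iota_{w,z-w}$ --- produces precisely $\sum_{j\geq 0}\binom{-m-1}{j}w^{-m-1-j}(z-w)^j$. Hence the two sides agree monomial by monomial.

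The one genuine point requiring care --- and the step I expect to be the main obstacle --- is the passage back from monomials to the full distribution $a(z)=\sum_m a_m z^{-m-1}$, since this interchanges the sum over $m$ with the sum over $j$. I would argue the rearrangement is harmless by noting that for each fixed $j$ the coefficient of $(z-w)^j$ is $\sum_m a_m\binom{-m-1}{j}w^{-m-1-j}=\partial_w^{(j)}a(w)$, which is again a well-defined formal distribution in $w$: with $j$ held fixed, the shift $w^{-m-1}\mapsto w^{-m-1-j}$ is a bijection on monomials, so no power of $w$ receives ill-defined contributions from infinitely many pairs $(m,j)$. Making precise that $\iota_{w,z-w}$ commutes with the sum over $m$ in this graded sense is the only thing beyond formal bookkeeping, and once it is in place the identity follows.
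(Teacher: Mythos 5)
The paper never proves this statement---it is quoted, with attribution, from Kac's \emph{Vertex algebras for beginners} (2.4.3) as one of two external tools needed for the main theorem---so there is no internal proof to compare yours against. Your argument is correct and is essentially the standard proof of Kac's formal Taylor formula: $\mathbb{C}$-linear reduction to the monomials $z^{-m-1}$, the identity $\partial_w^{(j)}w^{-m-1}=\binom{-m-1}{j}w^{-m-1-j}$, the generalized binomial expansion of $\bigl(w+(z-w)\bigr)^{-m-1}$, which is precisely the expansion $\iota_{w,z-w}$ valid for $|z-w|<|w|$, and the correct identification of the only delicate point, namely that for each fixed $j$ the coefficient $\sum_m a_m\binom{-m-1}{j}w^{-m-1-j}=\partial_w^{(j)}a(w)$ is a well-defined formal distribution because the shift $w^{-m-1}\mapsto w^{-m-1-j}$ is a bijection on monomials, so the interchange of the sums over $m$ and $j$ is legitimate.
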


\begin{theorem}[\cite{MR99f:17033}, Theorem 2.3.2]\label{kac}  Set $\mathbb C[\mathbf x]=\mathbb C[x_n,x^1_n|n\in\mathbb Z]$ and $\mathbb C[\mathbf y]= C[y_m,y_m^1|m\in\mathbb N^*]$.  Let $a(z)$ and $b(z)$ 
be formal distributions with coefficients in the associative algebra 
 $\End(\mathbb C[\mathbf x]\otimes \mathbb C[\mathbf y])$ where we are using the usual normal ordering.   The
following are equivalent
\begin{enumerate}[(i)]
\item
$\displaystyle{[a(z),b(w)]=\sum_{j=0}^{N-1}\partial_w^{(j)}
\delta(z-w)c^j(w)}$, where $c^j(w)\in \End(\mathbb C[\mathbf x]\otimes \mathbb 
C[\mathbf y])[\![w,w^{-1}]\!]$.
\item
$\displaystyle{\lfloor
ab\rfloor=\sum_{j=0}^{N-1}\iota_{z,w}\left(\frac{1}{(z-w)^{j+1}}
\right)
c^j(w)}$.
\end{enumerate}\label{Kacsthm}
\end{theorem}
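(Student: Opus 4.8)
The plan is to reduce everything to the single expansion identity
$\partial_w^{(j)}\delta(z-w)=\iota_{z,w}\frac{1}{(z-w)^{j+1}}-\iota_{w,z}\frac{1}{(z-w)^{j+1}}$,
which I would obtain by applying $\partial_w^{(j)}$ to $\delta(z-w)=\iota_{z,w}\frac{1}{z-w}-\iota_{w,z}\frac{1}{z-w}$ (the latter being the difference of the two geometric expansions of $(z-w)^{-1}$, with $\iota_{z,w}$ commuting with $\partial_w$). The decisive structural point is a bookkeeping one: $\iota_{z,w}\frac{1}{(z-w)^{j+1}}$ contains only strictly negative powers of $z$ (and non-negative powers of $w$), whereas $\iota_{w,z}\frac{1}{(z-w)^{j+1}}$ contains only non-negative powers of $z$, so the two summands of $\partial_w^{(j)}\delta(z-w)$ occupy complementary subspaces of $\End(\mathbb C[\mathbf x]\otimes\mathbb C[\mathbf y])[\![z^{\pm1},w^{\pm1}]\!]$.

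For $(i)\Rightarrow(ii)$ I would first note that $a(z)_-=\sum_{m\ge 0}a_mz^{-m-1}$ carries only strictly negative powers of $z$, so $\lfloor ab\rfloor=[a(z)_-,b(w)]$ is exactly the strictly-negative-in-$z$ component of $[a(z),b(w)]$, while $[a(z)_+,b(w)]$ is its non-negative-in-$z$ component. Projecting $(i)$ onto strictly negative powers of $z$ then annihilates every $\iota_{w,z}$ summand and returns $\lfloor ab\rfloor=\sum_{j}\iota_{z,w}\frac{1}{(z-w)^{j+1}}c^j(w)$, since each $c^j(w)$ involves $w$ only. The coefficients are simultaneously pinned down by $c^j(w)=\mathrm{Res}_z\,(z-w)^j[a(z),b(w)]=\mathrm{Res}_z\,(z-w)^j\lfloor ab\rfloor$, the second equality holding because $(z-w)^j$ times the non-negative-in-$z$ remainder has no $z^{-1}$ term.

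For $(ii)\Rightarrow(i)$ I would again take $c^j(w)=\mathrm{Res}_z\,(z-w)^j\lfloor ab\rfloor$; using $\mathrm{Res}_z\,(z-w)^k\iota_{z,w}\frac{1}{(z-w)^{j+1}}=\delta_{jk}$ for $j,k\ge 0$, the finiteness of the sum in $(ii)$ forces $c^k(w)=0$ for $k\ge N$. The subtlety—and the step I expect to be the main obstacle—is that matching the strictly-negative-in-$z$ parts does not by itself yield the full identity $(i)$; one must also control the non-negative-in-$z$ part $[a(z)_+,b(w)]$. I would handle this through the companion contraction $\lfloor ba\rfloor=[b(w)_-,a(z)]$, which is supported on non-negative powers of $z$ and strictly negative powers of $w$. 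In the present free-field setting the $\pm$-parts of the generating fields commute (hypothesis (2) of Wick's Theorem), whence $:a(z)b(w):=:b(w)a(z):$ and therefore $[a(z),b(w)]=\lfloor ab\rfloor-\lfloor ba\rfloor$. It then remains to verify $\lfloor ba\rfloor=\sum_{j}\iota_{w,z}\frac{1}{(z-w)^{j+1}}c^j(w)$ with the same $c^j(w)$—equivalently, that $(z-w)^N[a(z),b(w)]=0$, so that the reconstruction $[a(z),b(w)]=\sum_j\partial_w^{(j)}\delta(z-w)c^j(w)$ follows from the decomposition theorem for local formal distributions. Throughout, the disjointness of the supports of $\iota_{z,w}$ and $\iota_{w,z}$ is what renders each $c^j(w)$ unambiguous and lets the two contractions be reassembled into the single $\delta$-function expansion.
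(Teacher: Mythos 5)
First, a point of order: the paper never proves this theorem; it is imported, in abridged form, from Kac's book \cite{MR99f:17033}, so the only meaningful comparison is with Kac's own argument. Your direction (i)$\Rightarrow$(ii) is correct and is essentially that argument: the expansion $\partial_w^{(j)}\delta(z-w)=\iota_{z,w}\frac{1}{(z-w)^{j+1}}-\iota_{w,z}\frac{1}{(z-w)^{j+1}}$, the observation that $\lfloor ab\rfloor=[a(z)_-,b(w)]$ is exactly the strictly-negative-in-$z$ part of the commutator while the $\iota_{w,z}$ terms carry only non-negative powers of $z$, and the residue identities $c^j(w)=\mathrm{Res}_z\,(z-w)^j[a(z),b(w)]=\mathrm{Res}_z\,(z-w)^j\lfloor ab\rfloor$ are all sound; that half of your proposal is complete.

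The genuine gap is in (ii)$\Rightarrow$(i), at precisely the step you flag and then leave open: ``it remains to verify $\lfloor ba\rfloor=\sum_j\iota_{w,z}\frac{1}{(z-w)^{j+1}}c^j(w)$, equivalently $(z-w)^N[a(z),b(w)]=0$.'' This cannot be verified, because with the statement read literally it is false: condition (ii) constrains only $[a(z)_-,b(w)]$ and says nothing about $[a(z)_+,b(w)]$. Concretely, take $a(z)=x_0z^0$ (multiplication operator, so $a(z)_-=0$) and $b(w)=\partial_{x_0}w^{-1}$ (so $b(w)_+=0$); both are fields with coefficients in $\End(\mathbb C[\mathbf x]\otimes\mathbb C[\mathbf y])$, the same-sign parts commute (your imported hypothesis (2) of Wick's theorem holds, and the contractions are even central), and $\lfloor ab\rfloor=[a(z)_-,b(w)]=0$, so (ii) holds with every $c^j=0$; yet $[a(z),b(w)]=[x_0,\partial_{x_0}]w^{-1}=-w^{-1}\neq 0$, which no power of $(z-w)$ annihilates, so (i) fails. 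Thus locality is an independent input, not a consequence of (ii). The source of the trouble is the quotation rather than your strategy: in Kac's book condition (ii) consists of a \emph{pair} of equations, the expansion of $a(z)b(w)$ together with the companion $b(w)a(z)=\sum_{j}\iota_{w,z}\frac{1}{(z-w)^{j+1}}c^j(w)+{:}a(z)b(w){:}$, and with both equations the implication (ii)$\Rightarrow$(i) is an immediate subtraction via your identity $[a(z),b(w)]=\lfloor ab\rfloor-\lfloor ba\rfloor$. The paper silently drops the second equation and compensates only informally by speaking afterwards of ``mutually local'' distributions. If you add either that companion equation or mutual locality as a hypothesis, your outline (companion contraction, residue identities, decomposition theorem for local distributions) closes correctly; without one of them, no proof of (ii)$\Rightarrow$(i) exists.
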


So the singular part of the {\it operator product
expansion}
$$
\lfloor
ab\rfloor=\sum_{j=0}^{N-1}\iota_{z,w}\left(\frac{1}{(z-w)^{j+1}}
\right)c^j(w)
$$
completely determines the bracket of mutually local formal
distributions $a(z)$ and $b(w)$.   In the physics literature one writes
$$
a(z)b(w)\sim \sum_{j=0}^{N-1}\frac{c^j(w)}{(z-w)^{j+1}}.
$$

\subsection{DJKM Heisenberg algebra}Set
\begin{equation}
\psi'_{ij}(c)=\begin{cases} 
\mathbf 1_{i+j-2}&\quad \text{for}\quad i+j=1,0,-1,-2 \\
P_{-3,i+j-2}(c) (\mathbf 1_{-3}+c\mathbf 1_{-1})&\quad \text{for} \quad i+j =2n-1\geq 3, \\
P_{-3,i+j-2}(c) (c\mathbf 1_{-3}+\mathbf 1_{-1})&\quad \text{for} \quad i+j =-2n+1\leq - 3,   \\
P_{-4,|i+j|-2}(c) \mathbf 1_{-4} +P_{-2,|i+j|-2}(c)\mathbf 1_{-2}&\quad\text{for}\quad |i+j| =2n \geq 2,   \\
\end{cases}
\end{equation}
for $n\in\mathbb Z$ and $\psi'(c,w)=\sum_{n\in\mathbb Z}\psi'_n(c)w^n$ for $\psi'_{i+j}(c):=\psi'_{ij}(c)$.

The Cartan subalgebra $\mathfrak h$ tensored with $\mathcal R$ generates a subalgebra of $\hat{{\mathfrak g}}$ which is an extension of an oscillator algebra.    This extension motivates the following definition:  The Lie algebra with generators $b_{m},b_m^1$, $m\in\mathbb Z$, $\mathbf 1_i$, $i\in\{0,-1,-2,-3,-4\}$ and relations
\begin{align}
[b_{m},b_{n}]&=(n-m)\,\delta_{m+n,0}\mathbf 1_0=-2m\,\delta_{m+n,0}\mathbf 1_0\label{b1} \\
[b^1_m,b_n^1] &=2 \left((n+2)\delta_{m+n,-4}-2c(n+1)\delta_{m+n,-2}+n\delta_{m+n,0}\right)\mathbf 1_0\label{b2}\\
[b^1_m,b_n] &=2n\psi'_{mn}(c)\label{b3} \\
[b_{m},\mathbf 1_i]&=[b_{m}^1,\mathbf 1_i]= 0.\label{b4}
\end{align}
we will give the appellation the {\it DJKM (affine) Heisenberg algebra} and denote it by $\hat{\mathfrak b}_3$.

If we introduce the formal distributions
\begin{equation} 
\beta(z):=\sum_{n\in\mathbb Z} b_nz^{-n-1},\quad \beta^1(z):=\sum_{n\in\mathbb Z}b_n^1z^{-n-1}=\sum_{n \in\mathbb Z}b_{n+\frac{1}{2}}z^{-n-1}.
\end{equation}
(where $b_{n+\frac{1}{2}}:=b^1_n$)
then using calculations done earlier for DJKM Lie algebra we can see that the relations above can be rewritten in the form
\begin{align*}\label{bosonrelations}
[\beta(z),\beta(w)]&=2\mathbf 1_0\partial_z\delta(z/w)=-2\partial_w\delta(z/w)\mathbf 1_0 \\
[\beta^1(z),\beta^1(w)]
&=-2\left(P(w)\partial_w\delta(z/w)+ \frac{1}{2}\partial_wP(w)\delta(z/w)\right)\mathbf 1_0 \\
[\beta^1(z),\beta(w)]&=2\partial_w\psi'(c,w)\delta(z/w)-2w\psi'(c,w)\partial_w\delta(z/w).
\end{align*}

 Set
\begin{gather*}
\hat{\mathfrak h}_3^\pm:=\sum_{n\gtrless 0}\left(\mathbb Cb_n+\mathbb Cb_n^1\right),\\ 
\hat{ \mathfrak h}_3^0:=  \mathbb C\mathbf 1_0\oplus \mathbb C\mathbf 1_{-1}\oplus \mathbb C\mathbf 1_{-2}\oplus \mathbb C\mathbf 1_{-3}\oplus \mathbb C\mathbf 1_{-4}\oplus \mathbb Cb_0\oplus \mathbb Cb^1_0.
\end{gather*}
We introduce a Borel type subalgebra
\begin{align*}
\hat{\mathfrak b}_3&= \hat{\mathfrak h}_3^+\oplus \hat{\mathfrak h}_3^0.
\end{align*}
Due to the defining relations above one can see that $\hat{\mathfrak b}_3$ is a subalgebra.

\begin{lem}\label{heisenbergprop}
Let $\mathcal V=\mathbb C\mathbf v_0\oplus \mathbb C\mathbf v_1$ be a two dimensional representation of $\hat{\mathfrak h}_3^+\mathbf v_i=0$ for $i=0,1$.   Suppose  $\lambda,\mu,\nu,\varkappa, \chi_{-1}, \chi_{-2}, \chi_{-3}, \chi_{-4},\kappa_0 \in \mathbb C$  are such that
\begin{align*}
b_0\mathbf v_0&=\lambda \mathbf v_0,  &b_0\mathbf v_1&=\lambda \mathbf v_1 \\
b_0^1\mathbf v_0&=\mu \mathbf v_0+\nu \mathbf v_1,  &b_0^1\mathbf v_1&=\varkappa \mathbf v_0+\mu \mathbf v_1\\
\mathbf 1_j\mathbf v_i&=\chi _j  \mathbf v_i,\quad   &\mathbf 1_0\mathbf v_i&=\kappa _0\mathbf v_i,\quad i=0,1,\quad j=-1,-2,-3,-4.
\end{align*}
Then the above defines a representation of  $\hat{\mathfrak b}_3$.  Not only that but also $\chi_{-1}=\chi_{-2}=\chi_{-3}=\chi_{-4}=0$ and $\psi'_{mn}=0$, for all $m,n\in\mathbb Z$.

\end{lem}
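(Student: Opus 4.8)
The plan is to exploit that $\hat{\mathfrak b}_3=\hat{\mathfrak h}_3^+\oplus\hat{\mathfrak h}_3^0$ is generated by the $b_n,b_n^1$ with $n\ge 0$ together with the central elements $\mathbf 1_0,\mathbf 1_{-1},\dots,\mathbf 1_{-4}$, so that only the relations \eqnref{b1}, \eqnref{b2}, \eqnref{b3}, \eqnref{b4} with \emph{nonnegative} indices $m,n\ge 0$ have to be tested against the proposed action. First I would record the action explicitly: $b_n\mathbf v_i=b_n^1\mathbf v_i=0$ for $n>0$, while $b_0$ acts by the scalar $\lambda$, $b_0^1$ acts by the operator $M$ with $M\mathbf v_0=\mu\mathbf v_0+\nu\mathbf v_1$ and $M\mathbf v_1=\varkappa\mathbf v_0+\mu\mathbf v_1$, and each $\mathbf 1_j$ acts by the scalar $\chi_j$ (with $\chi_0:=\kappa_0$).

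The first observation is that \eqnref{b1}, \eqnref{b2} and \eqnref{b4} hold automatically: for $m,n\ge 0$ one has $m+n\ge 0$, so $\delta_{m+n,-2}=\delta_{m+n,-4}=0$ identically and $\delta_{m+n,0}$ is nonzero only when $m=n=0$, in which case the accompanying numerical coefficient ($-2m$ in \eqnref{b1}, $n$ in \eqnref{b2}) is itself $0$; meanwhile on the left the operators commute because $\hat{\mathfrak h}_3^+$ acts by zero, $b_0$ acts by a scalar, and the $\mathbf 1_i$ are central. Hence the only relation carrying content is \eqnref{b3}. Applying it to $b_0^1$ and $b_n$ with $n>0$ gives $[M,0]=0$ on the left, so $2n\,\psi'_{0n}(c)=0$ forces $\psi'_n(c)$ to act as $0$ on $\mathcal V$ for every $n\ge 1$ (the brackets $[b_m^1,b_n]$ with $m>0$ yield nothing new). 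Reading off the definition of $\psi'_{ij}(c)$ and substituting $\mathbf 1_j\mapsto\chi_j$, this is the homogeneous linear system
\begin{align*}
n=1:&\quad \chi_{-1}=0,\\
n=2:&\quad P_{-4,0}(c)\,\chi_{-4}+P_{-2,0}(c)\,\chi_{-2}=0,\\
n=3:&\quad P_{-3,1}(c)\,(\chi_{-3}+c\,\chi_{-1})=0,\\
n=4:&\quad P_{-4,2}(c)\,\chi_{-4}+P_{-2,2}(c)\,\chi_{-2}=0.
\end{align*}

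The crux is to evaluate the coefficients appearing here from the recursion $(6+2k)P_k=4kc\,P_{k-2}-2(k-3)P_{k-4}$ together with the initial data of the corresponding Elliptic and Gegenbauer cases: a short computation yields $P_{-3,1}(c)=\tfrac12$, $P_{-4,0}(c)=1$, $P_{-4,2}(c)=\tfrac{4c}{5}$, $P_{-2,0}(c)=0$ and $P_{-2,2}(c)=\tfrac15$. Then $n=1$ gives $\chi_{-1}=0$; $n=2$ gives $\chi_{-4}=0$; $n=3$ gives $\tfrac12\chi_{-3}=0$, hence $\chi_{-3}=0$; and $n=4$ gives $\tfrac15\chi_{-2}=0$, hence $\chi_{-2}=0$. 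Equivalently, the $2\times2$ even subsystem has determinant $P_{-4,0}P_{-2,2}-P_{-4,2}P_{-2,0}=\tfrac15\neq0$ for every $c$, so it is nonsingular. I expect confirming these five coefficient values, and that the even subsystem is nonsingular, to be the only genuine obstacle; everything else is bookkeeping.

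Finally, since every $\psi'_{ij}(c)$ is by definition a linear combination of $\mathbf 1_{-1},\mathbf 1_{-2},\mathbf 1_{-3},\mathbf 1_{-4}$ alone — no case involves $\mathbf 1_0$ — the vanishing of $\chi_{-1},\dots,\chi_{-4}$ forces $\psi'_{mn}(c)$ to act as $0$ on $\mathcal V$ for \emph{all} $m,n\in\mathbb Z$, not merely for $m+n\ge1$. This in turn shows that \eqnref{b3}, and therefore every defining relation, is respected by the action, so the data indeed define a representation of $\hat{\mathfrak b}_3$, establishing both assertions.
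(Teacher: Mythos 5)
Your proposal is correct and follows essentially the same route as the paper's proof: restrict to the relations with nonnegative indices, observe that \eqref{b1}, \eqref{b2} and \eqref{b4} hold vacuously there, and extract the vanishing conditions from \eqref{b3} with $n>0$. In fact your write-up is more complete than the paper's, which passes from $\psi'_{mn}=0$ for $n\neq 0$ directly to ``Consequently $\chi_{-1}=\chi_{-2}=\chi_{-3}=\chi_{-4}=0$'' without the coefficient computation you supply (namely $P_{-4,0}=1$, $P_{-2,0}=0$, $P_{-3,1}=\tfrac{1}{2}$, $P_{-2,2}=\tfrac{1}{5}$, $P_{-4,2}=\tfrac{4c}{5}$, so the even subsystem has determinant $\tfrac{1}{5}\neq 0$), and without the $m=0$, $n=1$ bracket that is actually needed to isolate $\chi_{-1}$ --- the paper only treats $m\neq 0\neq n$, which by itself yields only the combination $\chi_{-3}+c\,\chi_{-1}=0$ from the odd equations.
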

\begin{proof} Since $b_m$ acts by scalar multiplication for $m,n\geq 0$, the first defining relation \eqnref{b1} is satisfied for $m,n\geq 0$. 
The second relation \eqnref{b2} is also satisfied as the right hand side is zero if $m\geq 0,n\geq 0$.  If $n=0$, then since $b_0$ acts by a scalar, the relation \eqnref{b3} leads to no condition on $\lambda,\mu,\nu,\varkappa, \chi_1,\kappa_0 \in \mathfrak h_3^0$.    If $m\neq0$ and $n\neq0$, the third relation give us
$$
0=b^1_mb_n\mathbf v_i-b_nb^1_m\mathbf v_i=[b^1_m,b_n] \mathbf v_i=2n\psi'_{mn}\mathbf v_i=0.
$$
and then $\psi'_{mn}=0$ for $n\neq0$. Consequently $\chi_{-1}=\chi_{-2}=\chi_{-3}=\chi_{-4}=0$ and $\psi'_{mn}=0$ for all $m,n\in\mathbb Z$.
\end{proof}

\begin{lem} \label{rhorep}The linear map $\rho:\hat{\mathfrak b}_3\to \text{End}(\mathbb C[\mathbf y]\otimes \mathcal V)$ defined  by 
\begin{align}
\rho(b_{n})&=y_{n} \quad \text{ for }n<0 \\
\rho(b_{n}^1)&=y_{n}^1+\delta_{n,-1}\partial_{y_{-3}^1}\chi_0-\delta_{n,-3}\partial_{y_{-1}^1}\chi_0\quad \text{ for }n<0 \\
\rho (b_n) &= -2n \partial_{ y_{-n} }\chi_0 \quad \text{ for }n>0 \\
\rho(b^1_n)&=2(n+2) \partial_{y^1_{-n-4}} \chi_0-4c(n+1) \partial_{y^1_{-n-2}} \chi_0 +2n \partial_{y^1_{-n}} \chi_0  \quad \text{ for }n>0\\
\rho(b_{0})&=\lambda \\
\rho(b_{0}^1)&=4\partial_{y_{-4}^1}\chi_0-2c\partial_{y_{-2}^1}\chi_0 +B_0^1.
\end{align}
is a representation of $\hat{\mathfrak b}_3$.
\end{lem}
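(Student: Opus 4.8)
The plan is to use that $\hat{\mathfrak b}_3$ is presented by the generators $b_n,b_n^1,\mathbf 1_i$ together with the relations \eqnref{b1}--\eqnref{b4}; by the universal property of an algebra given by generators and relations it is enough to check that the prescribed images satisfy those same relations. Well-definedness of the images needs no argument, since each $\rho(b_n),\rho(b_n^1)$ is a single multiplication or a finite-order differential operator, hence lies in $\End(\mathbb C[\mathbf y]\otimes\mathcal V)$. I first record the action of the center: $\rho(\mathbf 1_0)=\chi_0$, and by \lemref{heisenbergprop} $\rho(\mathbf 1_{-1})=\cdots=\rho(\mathbf 1_{-4})=0$. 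The entire verification then rests on the canonical commutation relation $[\partial_{y_j},y_k]=\delta_{jk}$ and its primed analogue, together with the remark that operators in the $y$-variables, operators in the $y^1$-variables, and the $\mathcal V$-action (through $B_0^1$) act on mutually commuting factors.

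Relations \eqnref{b3} and \eqnref{b4} come essentially for free. Because $\chi_{-1}=\cdots=\chi_{-4}=0$ and $\psi'_{mn}(c)$ is, for $m,n$ not both $0$, a combination of $\mathbf 1_{-1},\dots,\mathbf 1_{-4}$ alone, we get $\rho(\psi'_{mn}(c))=0$, so \eqnref{b3} asserts merely that $\rho(b_m^1)$ and $\rho(b_n)$ commute; this holds because the former acts only through the $y^1$-variables and $\mathcal V$ while the latter acts only through the $y$-variables. Relation \eqnref{b4} is immediate, as every $\mathbf 1_i$ acts by a scalar and is therefore central.

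The content is in \eqnref{b1} and \eqnref{b2}, which I would treat by a case split on the signs of $m,n$. For $m,n>0$ both images are differential operators and commute, matching right-hand sides that vanish since $m+n>0$ meets none of the indices $0,-2,-4$. For $m,n<0$ the images are multiplications up to the correction terms, and the only strictly negative pair with a nonzero right-hand side in \eqnref{b2} is $(m,n)=(-1,-3)$ (and its transpose), where the corrections $+\chi_0\partial_{y_{-3}^1}$ and $-\chi_0\partial_{y_{-1}^1}$ built into $\rho(b_{-1}^1),\rho(b_{-3}^1)$ furnish exactly the required central term. In the mixed-sign case the differential image strikes the multiplication variable of the other, and the canonical relation turns the three summands of $\rho(b_m^1)$ into $\delta_{m+n,-4},\delta_{m+n,-2},\delta_{m+n,0}$ with coefficients $2(m+2),-4c(m+1),2m$; collecting these and comparing with $2\big((n+2)\delta_{m+n,-4}-2c(n+1)\delta_{m+n,-2}+n\delta_{m+n,0}\big)$ settles \eqnref{b2}, while the one-delta version of the same computation settles \eqnref{b1}. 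The zero mode is dealt with separately: $\rho(b_0^1)$ paired against $\rho(b_{-4}^1)$ and $\rho(b_{-2}^1)$ supplies the $\delta_{m+n,-4}$ and $\delta_{m+n,-2}$ pieces, and $\rho(b_0)=\lambda$ is central for \eqnref{b1}.

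The hard part is the central cocycle of \eqnref{b2}. In contrast to an untwisted Heisenberg algebra, the bracket is twisted by $P(w)=w^4-2cw^2+1$, so each $b_m^1$ has simultaneous nonzero brackets with the three modes $b_{-m}^1,b_{-m-2}^1,b_{-m-4}^1$; the differential realization must therefore reproduce all three $\delta$-terms at once, with coefficients that are antisymmetric under $m\leftrightarrow n$ along each locus $m+n=-4,-2,0$. The delicate bookkeeping is concentrated at the boundary modes, where the corrections in $\rho(b_{-1}^1),\rho(b_{-3}^1)$ and the special coefficients of $\rho(b_0^1)$ intervene, and in keeping every sign consistent; I expect this coefficient-and-sign matching to be the principal obstacle, the generic mixed-sign case being routine once the convention is fixed. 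In parallel I would repackage the computation in the generating-function language set up after \eqnref{b4}: realize $\rho(\beta(z))$ and $\rho(\beta^1(z))$ as fields and verify the three displayed distribution identities directly, so that the twist enters only through the single factor $P(w)$ (and $\tfrac12\partial_w P(w)$) in $[\rho(\beta^1(z)),\rho(\beta^1(w))]$, which renders the three-term cocycle far more transparent than the mode-by-mode account.
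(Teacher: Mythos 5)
Your proposal follows essentially the same route as the paper's own proof: a direct, case-by-case verification of relations \eqnref{b1}--\eqnref{b4} split on the signs of $m,n$, using $[\partial_{y_j},y_k]=\delta_{jk}$, with \lemref{heisenbergprop} forcing $\mathbf 1_{-1},\dots,\mathbf 1_{-4}$ (hence every $\psi'_{mn}(c)$) to act by zero so that \eqnref{b3} reduces to commutativity of operators in disjoint sets of variables, and with the corrections built into $\rho(b^1_{-1}),\rho(b^1_{-3}),\rho(b^1_0)$ supplying the central terms at the boundary modes. The paper carries out exactly this mode-by-mode computation (it does not use the generating-function repackaging you suggest as an alternative), and it glosses the same coefficient-and-sign bookkeeping in the mixed-sign case of \eqnref{b2} that you flag as the principal obstacle, so your outline matches the paper's argument in both structure and level of detail.
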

\begin{proof}
For $m,n> 0$, it is straight forward to see $[\rho(b_n),\rho(b_m)]=[\rho(b^1_n),\rho(b^1_m)]=0$, and similarly for $m,n<0$, $[\rho(b_n),\rho(b_m)]=0$ and $[\rho(b^1_n),\rho(b^1_m)]=0$ if $n\notin\{-1,-3\}$ and
\begin{align*}
[\rho(b^1_{-1}),\rho(b^1_m)]&=[y_{-1}^1+\partial_{y_{-3}^1}\chi_0,y_{m}^1+\delta_{m,-1}\partial_{y_{-3}^1}\chi_0-\delta_{m,-3}\partial_{y_{-1}^1}\chi_0]\\
&=-\delta_{m,-3}\chi_0[y_{-1}^1,\partial_{y_{-1}^1}]\chi_0+\delta_{m,-3}[\partial_{y_{-3}^1},y_{-3}^1]\chi_0\\
&=-2\delta_{m,-3}\chi_0,
\end{align*}
\begin{align*}
[\rho(b^1_{-3}),\rho(b^1_m)]&=[y_{-3}^1-\partial_{y_{-1}^1}\chi_0,y_{m}^1+\delta_{m,-1}\partial_{y_{-3}^1}\chi_0-\delta_{m,-3}\partial_{y_{-1}^1}\chi_0]\\
&=\delta_{m,-1}\chi_0[y_{-3}^1,\partial_{y_{-3}^1}]\chi_0-\delta_{m,-1}[\partial_{y_{-1}^1},y_{-1}^1]\chi_0\\
&=2\delta_{m,-1}\chi_0,
\end{align*}
\begin{align*}
[\rho(b^1_{0}),\rho(b^1_m)]&=[4\partial_{y_{-4}^1}\chi_0-2c\partial_{y_{-2}^1}\chi_0,y_{m}^1+\delta_{m,-1}\partial_{y_{-3}^1}\chi_0-\delta_{m,-3}\partial_{y_{-1}^1}\chi_0]\\
&=-4\delta_{m,-4}\chi_0+2c\delta_{m,-2}\chi_0.
\end{align*}

For $m>0$ and $n\leq 0$ we have 
\begin{align*}
[\rho(b_m),\rho(b_n)]&=[-2m \partial_{ y_{-m} }\chi_0 ,y_n]=-2m\delta_{m,-n}\chi_0,  \\ \\ 
[\rho(b_m),\rho(b^1_n)]&=[-2m \partial_{ y_{-m} }\chi_0  ,y^1_n+\delta_{n,-1}\partial_{y_{-3}^1}\chi_0-\delta_{n,-3}\partial_{y_{-1}^1}\chi_0]=0 ,    \\  \\
[\rho(b^1_m),\rho(b^1_n)]&=[2(n+2) \partial_{y^1_{-n-4}} \chi_0-4c(n+1) \partial_{y^1_{-n-2}} \chi_0 +2n \partial_{y^1_{-n}} \chi_0\\
&\hspace{1.0cm} ,y^1_n+\delta_{n,-1}\partial_{y_{-3}^1}\chi_0-\delta_{n,-3}\partial_{y_{-1}^1}\chi_0]  \\ 
&= 2(n+2) \delta_{m+n,-4} \chi_0 -4c(n+1) \delta_{m+n,-2} \chi_0+2n\delta_{m+n,0}\chi_0,      \\  \\
[\rho(b^1_m),\rho(b_n)]&=[2(m+2) \partial_{y^1_{-m-4}} \chi_0-4c(m+1) \partial_{y^1_{-m-2}} \chi_0 +2m \partial_{y^1_{-m}} \chi_0, \\
&\hspace{1.0cm} y_n+\delta_{n,-1}\partial_{y_{-3}^1}\chi_0-\delta_{n,-3}\partial_{y_{-1}^1}\chi_0]  \\ 
&=0.
\end{align*}
\end{proof}

\section{Two realizations of DJKM algebra $\hat{{\mathfrak g}}$.}
Recall
\begin{align}
P(w) =w^4-2cw^2+1.
\end{align}

Our main result is the following 
\begin{theorem} \label{mainthm}  Fix $r\in\{0,1\}$, which then fixes the corresponding normal ordering convention defined in the previous section.  Set $\hat{{\mathfrak g}} =\left(\mathfrak{sl}(2,\mathbb C)\otimes \mathcal R\right)\oplus \mathbb C\omega_0\oplus \mathbb C\omega_{-1}\oplus \mathbb C\omega_{-2}\oplus \mathbb C\omega_{-3}\oplus \mathbb C\omega_{-4}$ and assume that $\chi_0\in\mathbb C$ and $\mathcal V$ as in \lemref{heisenbergprop}.   Then using \eqnref{c}, \eqnref{c*} and \lemref{rhorep}, the following defines a representation of DJKM algebra ${\mathfrak g}$ on $\mathbb C[\mathbf x]\otimes \mathbb C[\mathbf y]\otimes \mathcal V$:
\begin{align*}
\tau(\omega_{-1})&=\tau(\omega_{-2})=\tau(\omega_{-3})=\tau(\omega_{-4})=0, \qquad
\tau(\omega_0)=\chi_0=\kappa_0+4\delta_{r,0} ,  \\ 
\tau(f(z))&=-\alpha, \qquad
\tau(f^1(z))=- \alpha^1,   \\ \\
\tau(h(z))
&=2\left(:\alpha\alpha^*:+:\alpha^1\alpha^{1*}: \right)
     +\beta , \\  \\
\tau(h^1(z))
&=2\left(:\alpha^1\alpha^*: +P(z):\alpha\alpha^{1*}: \right) +\beta^1,  \\  \\
\tau(e(z)) 
&=:\alpha(\alpha^*)^2:+P(z):\alpha(\alpha^{1*})^2: +2 :\alpha^1\alpha^*\alpha^{1*}:+\beta\alpha^*+\beta^1\alpha^{1*} +\chi_0\partial\alpha^*  \\ \\
\tau(e^1(z))  
&=\alpha^1\alpha^*\alpha^* 
 	+P(z)\left(\alpha^1 (\alpha^{1*} )^2 +2 : \alpha \alpha^{*} \alpha^{1*}:\right)  \\
&\quad +\beta^1 \alpha^* +P(z)\beta \alpha^{1*} +\chi_0\left(P(z)   \partial_z\alpha^{1*}   +\dfrac{1}{2}\partial_zP(z)   \alpha^{1*} \right) .
\end{align*}
\end{theorem}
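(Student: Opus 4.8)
The plan is to lean on the presentation of $\hat{\mathfrak g}$ recorded in \thmref{3ptthm}, whose bracket relations are repackaged as the three formal-distribution identities \eqnref{r1}--\eqnref{r3} together with the centrality relations \eqnref{omega1}. Since $\hat{\mathfrak g}$ is generated by the coefficients of the fields $x(z),x^1(z)$ with $x\in\{e,f,h\}$ subject precisely to these relations, a linear assignment on the generators extends to a representation exactly when it preserves the relations. It therefore suffices to check that the prescribed fields $\tau(x(z)),\tau(x^1(z))$ satisfy \eqnref{r1}--\eqnref{r3} after the substitution $\omega_0\mapsto\chi_0$ and $\omega_{-1}=\omega_{-2}=\omega_{-3}=\omega_{-4}\mapsto 0$; the centrality relations then hold automatically because $\tau(\omega_0)=\chi_0$ is a scalar. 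A first simplification is that $\tau(\psi(c,w))=0$: by its definition each $\psi_{ij}(c)$ is a combination of $\omega_{-1},\dots,\omega_{-4}$ alone, all of which $\tau$ kills, so the anomalous term of \eqnref{r3} disappears and that relation collapses to $[\tau(x(z)),\tau(y^1(w))]=\tau\big([x,y]^1(w)\big)\,\delta(z/w)$.

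The computational engine is Wick's Theorem combined with \thmref{kac}. Because the $\alpha$-family acts on the factor $\mathbb C[\mathbf x]$ while the Heisenberg fields $\beta,\beta^1$ act through $\rho$ (\lemref{rhorep}) on $\mathbb C[\mathbf y]\otimes\mathcal V$, the two families commute and every cross-contraction vanishes. I would first tabulate the elementary contractions $\lfloor\alpha\alpha^*\rfloor,\lfloor\alpha^*\alpha\rfloor,\lfloor\alpha^1\alpha^{1*}\rfloor,\lfloor\alpha^{1*}\alpha^1\rfloor$ in each normal-ordering convention (these are the expressions computed just after \eqnref{contraction}) and the $\beta,\beta^1$ contractions forced by \eqnref{b1}--\eqnref{b4} and \lemref{rhorep}. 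For each pair of composite fields I would expand the product by Wick's Theorem: the single contractions reproduce the ``current'' terms $[x,y](w)\delta$, $[x,y]^1(w)\delta$ and $P(w)[x,y](w)\delta$, while the double contractions produce the central terms; the commutator is then read off from the singular part of the operator product expansion via \thmref{kac}. The transfer of a prefactor $P(z)$ onto the point $w$ is handled by the $\delta$-function identity $f(z)\partial_w\delta(z/w)=f(w)\partial_w\delta(z/w)+f'(w)\delta(z/w)$ (an instance of \thmref{Taylorsthm}); in particular the symmetrization $P(w)\partial_w\delta(z/w)+\tfrac12(\partial P)(w)\delta(z/w)=\tfrac12\big(P(z)+P(w)\big)\partial_w\delta(z/w)$ is exactly what produces the $\tfrac12(\partial P)$ summand appearing in \eqnref{r2}.

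I would then organize the verification along the three relation types. The identities $[f(z),f(w)]=[f^1(z),f^1(w)]=[f(z),f^1(w)]=0$ are immediate, since $\tau(f)=-\alpha$, $\tau(f^1)=-\alpha^1$ and the $\alpha$-fields commute among themselves. All the relations of the form $[h(z),\,\cdot\,]$ and $[h^1(z),\,\cdot\,]$, together with $[e(z),f^1(w)]=[e^1(z),f(w)]$, carry no central term after the substitution, and each is settled by a single round of contractions whose linear terms recover the $\mathfrak{sl}_2$ structure constants weighted by the appropriate powers of $P(w)$. The genuinely central relations are $[h(z),h(w)]$ and $[e(z),f(w)]$ of type \eqnref{r1} and $[h^1(z),h^1(w)]$ and $[e^1(z),f^1(w)]$ of type \eqnref{r2}; here the $\beta,\beta^1$ contractions forced by \eqnref{b1}--\eqnref{b3} supply the Heisenberg part of the central terms.

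The hard part will be pinning down the central ($\chi_0$-)terms together with the normal-ordering anomaly. Two features make this delicate. First, the prescribed value $\tau(\omega_0)=\chi_0=\kappa_0+4\delta_{r,0}$ mixes the Heisenberg central charge $\kappa_0$ (entering through $\beta,\beta^1$ and \lemref{heisenbergprop}) with an anomaly $4\delta_{r,0}$ coming from the double contractions of the quadratic pieces $2:\alpha\alpha^*:$ and $2:\alpha^1\alpha^{1*}:$ inside $\tau(h(z))$; since these contractions differ between the two conventions (for $r=1$ the natural ordering annihilates $\lfloor\alpha\alpha^*\rfloor$), the $\beta$-$\gamma$ anomaly is present only when $r=0$, and one must check that the two contributions always recombine into the single scalar $\chi_0$ demanded by \eqnref{r1}. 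Second, in \eqnref{r2} and in the $e^1$ relation the cubic fields carry explicit $P(z)$ prefactors, so the $\tfrac12(\partial P)$ and $\chi_0\partial$ corrections have to emerge from the interaction of the double contractions with the symmetrization identity above and with the explicit derivative summands $\chi_0\partial\alpha^*$ and $\chi_0\big(P\partial_z\alpha^{1*}+\tfrac12(\partial_z P)\alpha^{1*}\big)$; tracking which contraction lands on which copy of $P(z)$, and collecting the resulting derivative-of-delta terms, is where the bookkeeping is heaviest and where an error is most likely to hide.
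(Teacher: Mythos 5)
Your overall strategy coincides with the paper's: treat \thmref{3ptthm} as a presentation of $\hat{\mathfrak g}$, recast the relations as the field identities \eqnref{r1}--\eqnref{r3}, observe that $\tau(\psi(c,w))=0$ (correct, since every $\psi_{ij}(c)$ involves only $\omega_{-1},\dots,\omega_{-4}$), and verify each bracket with Wick's Theorem, Taylor's Theorem and \thmref{kac}. Your accounting of the anomaly $\chi_0=\kappa_0+4\delta_{r,0}$ --- the Heisenberg charge $\kappa_0$ from $[\beta_\lambda\beta]$ plus the $r=0$ double-contraction contribution of $2\left(:\alpha\alpha^*:+:\alpha^1\alpha^{1*}:\right)$ --- matches the paper's computation of $[\tau(h)_\lambda\tau(h)]$, as does your use of the identity $f(z)\partial_w\delta(z/w)=f(w)\partial_w\delta(z/w)+f'(w)\delta(z/w)$ to generate the $\tfrac12\partial P$ terms.

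However, your work plan has a genuine hole: the case organization never verifies the Serre-type relations \eqnref{xs} on the $e$-side, namely $[\tau(e(z)),\tau(e(w))]=[\tau(e(z)),\tau(e^1(w))]=[\tau(e^1(z)),\tau(e^1(w))]=0$. You correctly dispose of the $f$-side instances as immediate (there $\tau(f)=-\alpha$, $\tau(f^1)=-\alpha^1$ are linear and the $\alpha$-fields commute), but the realization breaks the $e$--$f$ symmetry: $\tau(e(z))$ and $\tau(e^1(z))$ are cubic in the oscillators and carry $\beta$, $\beta^1$, $\chi_0\partial$ and $P(z)$ terms, so the vanishing of their mutual brackets is far from automatic --- it is a large cancellation that holds only for the precise coefficients in the statement and uses $\chi_0=\kappa_0+4\delta_{r,0}$ once more. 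In the paper this is exactly where the bulk of the effort goes: the single verification $[\tau(e)_\lambda\tau(e^1)]=0$ runs over a page of Wick expansions, with the other two asserted to be similar. Your opening reduction (``it suffices to check \eqnref{r1}--\eqnref{r3} for all pairs'') does cover these cases in principle, but the enumeration you then lay out --- $f$-type, $[h,\cdot\,]$/$[h^1,\cdot\,]$-type, and the four central brackets --- would leave them unchecked; and contrary to your closing assessment, it is in these cubic--cubic brackets, not in the central terms, that the heaviest bookkeeping and the greatest risk of error reside.
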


\begin{proof} The proof is very similar to the proof of Theorem 5.1 in \cite{MR2541818} and Theorem 5.1 in \cite{CoxJur}.
We  need to check that the following table is preserved under  $\tau$. 
\begin{table}[htdp]
\caption{}
\begin{center}
\begin{tabular}{c|cccccc}
$[\cdot_\lambda \cdot]$ & $f(w)$ & $f^1(w)$ & $h(w)$ & $h^1(w)$ & $e(w)$ & $e^1(w)$ \\
\hline
$f(z)$ & $0$ &  $0$ & $*$ &  $*$ &$*$  &$*$    \\
$f^1(z)$ &   &  $0$  & $*$  & $*$  & $*$  &$*$  \\
$h(z)$ &  &   &  $*$  & $*$  & $*$  & $*$ \\
$h^1(z)$ &&   &     & $*$  &    $*$ & $*$ \\
$e(z)$  &&   &     &  &    $0$ &  $0$  \\
$e^1(z)$ & &   &     &  &   &  $0$ \\
\end{tabular}
\end{center}
\label{default}
\end{table}%

Here $*$ indicates nonzero formal distributions that are obtained from the the defining relations \eqref{r1}, \eqref{r2}, and \eqref{r3}.
The proof is carried out using Wick's Theorem and Taylor's Theorem. 
We are going to make use of V. Kac's $\lambda$-notation (see \cite{MR99f:17033} section 2.2  for some of its properties) used in operator product expansions.  If $a(z)$ and $b(w)$ are formal distributions, then
$$
[a(z),b(w)]=\sum_{j=0}^\infty \frac{(a_{(j)}b)(w)}{(z-w)^{j+1}}
$$
is transformed under the {\it formal Fourier transform} 
$$
F^{\lambda}_{z,w}a(z,w)=\text{Res}_ze^{\lambda(z-w)}a(z,w),
$$
 into the sum
\begin{equation*}
[a_\lambda b]=\sum_{j=0}^\infty \frac{\lambda^j}{j!}a_{(j)}b.
\end{equation*}

\begin{align*}
[\tau(f)_\lambda\tau(f)]&=0,\quad [\tau(f)_\lambda\tau(f^1)]= 0,\quad [\tau(f^1)_\lambda\tau(f^1)]=0 \\ \\
[\tau(f)_\lambda\tau(h)]&=-\Big[\alpha_\lambda\Big(2\left(\alpha\alpha^*+\alpha^1\alpha^{1*} \right)   +\beta \Big)\Big]=  -2\alpha =   2\tau(f) , \\  \\
[\tau(f)_\lambda\tau(h^1)]&=-[\alpha_\lambda\left(2\left(\alpha^1\alpha^*
   +P\alpha\alpha^{1*} \right)
     +\beta^1\right)  ]= -2\alpha^{1}=   2\tau(f^1), \\  \\
[\tau(f)_\lambda\tau(e)]&=- [\alpha_\lambda\left(:\alpha(\alpha^*)^2:+P:\alpha(\alpha^{1*})^2: +2 :\alpha^1\alpha^*\alpha^{1*}:+\beta\alpha^*+\beta^1\alpha^{1*} +\chi_0\partial\alpha^*\right)] \\  
&=-2\left(:\alpha\alpha^*:    + :\alpha^1\alpha^{1*}: \right)-\beta-\chi_0\lambda  
= -\tau(h)-\chi_0\lambda   \\ \\
\tau(f)_\lambda\tau(e^1)]&=- \Big[\alpha_\lambda\Big(\alpha^1(\alpha^*)^2 
 	+P\left(\alpha^1 (\alpha^{1*} )^2 +2 : \alpha \alpha^{*} \alpha^{1*}:\right)  \  +\beta^1 \alpha^* +P\beta \alpha^{1*} +\chi_0\left(P  \partial \alpha^{1*}   +\frac{1}{2}\partial  P   \alpha^{1*} \right) \Big)\Big]\\  
&=-2\left(:\alpha^1\alpha^*:    + P:\alpha\alpha^{1*}: \right)-\beta^1   =-\tau(h^1).
\\ \\
[\tau(f^1)_\lambda\tau(h)]&=-[\alpha^1_\lambda\left(2\left(:\alpha\alpha^*:+:\alpha^1\alpha^{1*}: \right)   +\beta\right) ]=  -2\alpha^1 =   2\tau(f^1),  \\  \\
[\tau(f^1)_\lambda\tau(h^1)]&=-[\alpha^1_\lambda\left(2\left(:\alpha^1\alpha^*:
   +P:\alpha\alpha^{1*}: \right)
     +\beta^1\right)] =-2P\alpha^{1} =2P\tau(f^{1}) , \\  \\
[\tau(f^1)_\lambda\tau(e)]&=- [\alpha^1_\lambda\left(:\alpha(\alpha^*)^2:+P:\alpha(\alpha^{1*})^2: +2 :\alpha^1\alpha^*\alpha^{1*}:+\beta\alpha^*+\beta^1\alpha^{1*} +\chi_0\partial\alpha^*\right)] \\  
&=- \Big(  2P:\alpha\alpha^{1*}: +2 :\alpha^1\alpha^*: +\beta^1  \Big)  
=-\tau(h^1) \\ \\
[\tau(f^1)_\lambda\tau(e^1)]&=-[ \alpha^1_\lambda\Big(\alpha^1(\alpha^*)^2 
 	+P\left(\alpha^1 (\alpha^{1*} )^2 +2 : \alpha \alpha^{*} \alpha^{1*}:\right)  \  +\beta^1 \alpha^* +P\beta \alpha^{1*} +\chi_0\left(P  \partial \alpha^{1*}   +\frac{1}{2}(\partial  P)   \alpha^{1*} \right) \Big)] \\  
&=-\Big(
 	P\left(  2\left(:\alpha^1\alpha^{1*}:   + :\alpha\alpha^* :\right)
	+\beta+\chi_0\lambda  \right)+\frac{1}{2}\chi_0\partial P  \Big)  \\
&=-\left(P\tau(h)+P\chi_0\lambda+\chi_0\frac{1}{2}\partial P\right).
\end{align*}
 Note that $:a(z)b(z):$ and $:b(z)a(z):$ are usually not equal, but $:\alpha^1(w)\alpha^{*1}(w):=:\alpha^{1*}(w)\alpha^1(w):$ and $:\alpha(w)\alpha^*(w):=:\alpha^*(w)\alpha (w):$.  Thus we calculate

\begin{align*}
[\tau(h)_\lambda\tau(h)]&=\left(2\left(:\alpha\alpha^*:+:\alpha^1\alpha^{1*}: \right)   +\beta\right)  _\lambda \left(2\left(:\alpha\alpha^*:+:\alpha^1\alpha^{1*}: \right)   +\beta\right)]  \\
&= 4\Big(-:\alpha\alpha^*: +:\alpha^*\alpha: -:\alpha^1\alpha^{1*}: +:\alpha^{1*}\alpha^1:  \Big)  
-8\delta_{r,0}\lambda +[\beta_\lambda\beta]   \\
&=-2(4\delta_{r,0}+\kappa_0)\lambda
 \end{align*}
 Which can be put into the form of \eqref{r1}:
 \begin{align*}
\left[\tau(h(z)),\tau(h(w))\right]&=
 -2(4\delta_{r,0}+\kappa_0)\partial_{w}\delta(z/w)=
 -2\chi_0\partial_{w}\delta(z/w)=
 \tau\left(-2\omega_0\partial_{w}\delta(z/w)\right).  
  \end{align*}
  
Next we calculate

\begin{align*}
[\tau(h)_\lambda\tau(h^1)]&=\left[\left(2\left(:\alpha\alpha^*:+:\alpha^1\alpha^{1*}: \right)   +\beta \right) _\lambda \left(2\left(:\alpha^1\alpha^*:
   +P:\alpha\alpha^{1*}: \right)+\beta^1  \right) \right]\\  
&=4\Big(\left(:\alpha^* \alpha^1 : -:\alpha^1 \alpha^{*} :\right)   + P \left(-:\alpha\alpha^{1*} :  +:\alpha^{1*}\alpha :  \right)    \Big)  +[\beta_\lambda\beta^1]   
\end{align*}
Since $[a_n,a_m^{1*}]=[a^1_n,a_m^{*}]=0$, we have 
 \begin{align*}
\left[\tau(h(z)),\tau(h^1(w))\right]&=[\beta(z),\beta^1(w)]=0.
 \end{align*}
 As $\tau(\omega_1) =0$,  relation \eqref{r3} is satisfied. 
 
 We continue with

\begin{align*}
[\tau(h^1)_\lambda\tau(h^1)]&=\left[2\left(:\alpha^1\alpha^*:
   +P:\alpha\alpha^{1*}: \right)
     +\beta^1  \right)_\lambda \left(2\left(:\alpha^1 \alpha^* :
   +P:\alpha\alpha^{1*}: \right)
     +\beta^1 )  \right] \\  
&= 4P\left(-:\alpha\alpha^*:+:\alpha^{1*}\alpha^1:\right) +4P\left(-:\alpha^1\alpha^{1*} : +:\alpha^*\alpha: \right) 
  \\
&\quad  -8\delta_{r,0}P\lambda   -4\delta_{r,0}\partial P    + [\beta^1_\lambda \beta^1]   \\
&=-8\delta_{r,0}P\lambda   -4\delta_{r,0}\partial P   -2\kappa_0(P\lambda+\frac{1}{2}\partial P).
 \end{align*}
 Yielding the relation
\begin{align*}
\left[\tau(h^1(z)),\tau(h^1(w))\right]&=-2(4\delta_{r,0}+\kappa_0)\left(\left(w^4-2cw^2+1\right)\partial_w\delta(z/w)   +(2w^3-2cw))\delta(z/w)\right) \\
& = \tau(-(h,h)  \omega_0 P \partial_w\delta(z/w) -\frac{1}{2}(h,h) \partial P \omega_0  \delta(z/w))
\end{align*}

Next we calculate the $h$'s paired with the $e$'s:

\begin{align*}
[\tau(h)_\lambda \tau(e )]&=\Big[\Big(2\left(:\alpha\alpha^*:+:\alpha^1\alpha^{1*}: \right)
     +\beta \Big)_\lambda \\
& \quad :\alpha(\alpha^*)^2:+ P:\alpha(\alpha^{1*})^2: +2 :\alpha^1\alpha^*\alpha^{1*}:+ \beta\alpha^*+\beta^1\alpha^{1*} +\chi_0\partial\alpha^*\Big] \\ 
& = 4: \alpha (\alpha^*)^2:  - 2 : \alpha  (\alpha^*)^2:   - 4 \delta_{r,0} \alpha^* \lambda - 2P : \alpha(\alpha^{1*})^2:  
 + 4: \alpha^*  \alpha^1  \alpha^{1*}  : + 2  \alpha^*  \beta 
 + 2\chi_0  \alpha^* \lambda \\
 &\quad+  2\chi_0  \partial  \alpha^*+ 4P : \alpha (\alpha^{1*})^2: - 4 \delta_{r,0} \alpha^* \lambda + 2     \beta^1   \alpha^{1*}  - 2 \lambda \alpha^* \kappa_0 \\
&= 2 \tau (e)
\end{align*}
and
\begin{align*}
[\tau(h^1)_\lambda \tau(e)] & = \Big[ \left(2\left(:\alpha^1\alpha^*:
   +P:\alpha\alpha^{1*}: \right) + \beta^1 \right)_\lambda 
  \\
 &\hskip 40pt :\alpha(\alpha^*)^2:+ P:\alpha(\alpha^{1*})^2: +2 :\alpha^1\alpha^*\alpha^{1*}:+ \beta\alpha^*+\beta^1\alpha^{1*} +\chi_0\partial\alpha^*\Big]\\
 &=  -2  : \alpha^1( \alpha^*)^2 
 + 2P(2 : \alpha \alpha^* \alpha^{1*}: - :  \alpha^1  (\alpha^{1*})^2: - 2 \delta_{r,0} \alpha^{1*}  \lambda ) - 4 \delta_{r,0} \partial P  \alpha^{1*} \\
 &\quad+ 4 :  \alpha^1( \alpha^*)^2: + 2 \alpha^* \beta^1 + 4P: \alpha \alpha^{1*} \alpha^* + 4P(  : \alpha^1 (\alpha^{1*})^2 : - : \alpha \alpha^{*}\alpha^{1*}:  -\delta_{r,0}   \alpha^{1*}\lambda ) \\
&\quad+ 2P  \beta \alpha^{1*}+ 2 \chi_0 ( P\partial \alpha^{1*}   + \partial P \alpha^{1*}   +P \alpha^{1*}  
 \lambda  )- 2 ( P \lambda + \frac{1}{2} \partial P ) \alpha^{1*} \kappa_0\\
&=  2   :  \alpha^1( \alpha^*)^2:  +  2P  :  \alpha^1  (\alpha^{1*})^2:  + 4P:  \alpha \alpha^*  \alpha^{1*}:  +
2 \delta(z/w) \alpha^* \beta^1 \\
&\quad + 2P\beta \alpha^{1*}+ 2P \chi_0 \partial   \alpha^{1*}  +  \partial P  \alpha^{1*} \chi_0 \\
& = 2\tau (e^1  )
\end{align*}

Next we must calculate

\begin{align*}
[\tau(h)_\lambda \tau(e^1)]&=\Big[\Big(2\left(:\alpha\alpha^*:+:\alpha^1\alpha^{1*}: \right)
     +\beta \Big)_\lambda \\
&\quad  : \alpha^1\alpha^*\alpha^* :
 	+P\left(:\alpha^1 (\alpha^{1*} )^2 :+2 : \alpha \alpha^{*} \alpha^{1*}:\right)  +\beta^1 \alpha^* +P\beta \alpha^{1*}  +\chi_0\left(P  \partial \alpha^{1*}   +(1/2 \partial P )  \alpha^{1*} \right)\Big] \\  
&= 4  :\alpha^1 (\alpha^* )^2: - 4 P\delta_{r,0}   \alpha^{1*} \lambda + 2   \alpha^* \beta^1  - 2\delta(z/w) : \alpha^1 \alpha^* \alpha^* :-4P\delta_{r,0}  \alpha^{1*} \lambda + 2P  : \alpha^{1*}\alpha^1 \alpha^{1*}: \\
&\quad+ 4P : \alpha^{1*} \alpha \alpha^*:+2P\beta\alpha^{1*} + 2 \chi_0(   P \alpha^{1*}  \lambda +  P  \partial \alpha^{1*} + \frac{1} {2} \partial P   \alpha^{1*} )- 2 P \alpha^{1*}  \kappa_0 \lambda \\
& = 2:  \alpha^1( \alpha^*)^2:   +  2P : \alpha^1 (\alpha^{1*})^2:  + 4P: \alpha \alpha^* \alpha^{1*} : \\
 &\quad + 2 \beta^1  \alpha^*   +2P\beta\alpha^{1*}     +2\chi_0\left(P  \partial_w\alpha^{1*}   +(1/2 \partial P )  \alpha^{1*} \right)\\
 & =  2\tau(e^1 )
\end{align*}
and the proof for $[\tau(h^1)_\lambda \tau(e^1)]$ is similar. 
%

We prove the Serre relation for just one of the relations, $[\tau(e)_\lambda \tau(e^1)]$, and the proof of the others ($[\tau(e)_\lambda \tau(e)]$, $[\tau(e^1)_\lambda \tau(e^1)]$) are similar as the reader can verify. 

\begin{align*} 
[\tau(e)_\lambda \tau(e^1)]
&=  \Big[:\alpha(\alpha^*)^2:  _\lambda \Big(\alpha^1(\alpha^*)^2 
 	+(w^4-2cw^2+1)\left(\alpha^1 (\alpha^{1*} )^2 +2 : \alpha \alpha^{*} \alpha^{1*}:\right)  \\
&\qquad +\beta^1 \alpha^* +(w^4-2cw^2+1)\beta \alpha^{1*}  +\chi_0\left((w^4-2cw^2+1)   \partial_w\alpha^{1*}   +(2w^3-2cw)   \alpha^{1*} \right)\Big)   \Big]\\  
&\quad + \Big[P:\alpha(\alpha^{1*})^2 :{_\lambda}  \Big(:\alpha^1(\alpha^*)^2 :
 	+(w^4-2cw^2+1)\left(\alpha^1 (\alpha^{1*} )^2 +2 : \alpha \alpha^{*} \alpha^{1*}:\right)  \\
&\qquad +\beta^1 \alpha^* +(w^4-2cw^2+1)\beta \alpha^{1*}  +\chi_0\left((w^4-2cw^2+1)   \partial_w\alpha^{1*}   +(2w^3-2cw)   \alpha^{1*} \right)\Big)   \Big]\\   
&\quad +   \Big[2 :\alpha^1\alpha^*\alpha^{1*}:{_\lambda}  \Big(:\alpha^1(\alpha^*)^2 :
 	+(w^4-2cw^2+1)\left(\alpha^1 (\alpha^{1*} )^2 +2 : \alpha \alpha^{*} \alpha^{1*}:\right)  \\
&\qquad +\beta^1 \alpha^* +(w^4-2cw^2+1)\beta \alpha^{1*}  +\chi_0\left((w^4-2cw^2+1)   \partial_w\alpha^{1*}   +(2w^3-2cw)   \alpha^{1*} \right)\Big)   \Big]\\   
&\quad    +  \Big[\beta\alpha^*{ _\lambda}  \Big(:\alpha^1(\alpha^*)^2 :
 	+(w^4-2cw^2+1)\left(\alpha^1 (\alpha^{1*} )^2 +2 : \alpha \alpha^{*} \alpha^{1*}:\right)  \\
&\qquad +\beta^1 \alpha^* +(w^4-2cw^2+1)\beta \alpha^{1*}  +\chi_0\left((w^4-2cw^2+1)   \partial_w\alpha^{1*}   +(2w^3-2cw)   \alpha^{1*} \right)\Big)    \Big]\\  
&\quad  +\Big[\beta^1\alpha^{1*}{ _\lambda}  \Big(:\alpha^1(\alpha^*)^2 :
 	+(w^4-2cw^2+1)\left(\alpha^1 (\alpha^{1*} )^2 +2 : \alpha \alpha^{*} \alpha^{1*}:\right)  \\
&\qquad +\beta^1 \alpha^* +(w^4-2cw^2+1)\beta \alpha^{1*}  +\chi_0\left((w^4-2cw^2+1)   \partial_w\alpha^{1*}   +(2w^3-2cw)   \alpha^{1*} \right)\Big)    \Big]\\   
&\quad +\Big[  \chi_0\partial\alpha^*{_\lambda}  \Big(:\alpha^1(\alpha^*)^2 :
 	+(w^4-2cw^2+1)\left(\alpha^1 (\alpha^{1*} )^2 +2 : \alpha \alpha^{*} \alpha^{1*}:\right)  \\
&\qquad +\beta^1 \alpha^* +(w^4-2cw^2+1)\beta \alpha^{1*}  +\chi_0\left((w^4-2cw^2+1)   \partial_w\alpha^{1*}   +(2w^3-2cw)   \alpha^{1*} \right)\Big)   \Big] \\ \\  
&=   \Big[:\alpha(\alpha^*)^2:  _\lambda \Big(:\alpha^1(\alpha^*)^2 :
 	+2P : \alpha \alpha^{*} \alpha^{1*}:  +\beta^1 \alpha^* \Big)   \Big]\\  \\ 
&\quad + \Big[P:\alpha(\alpha^{1*})^2 :{_\lambda}  \Big(:\alpha^1(\alpha^*)^2:
 	+P\left(:\alpha^1 (\alpha^{1*} )^2: +2 : \alpha \alpha^{*} \alpha^{1*}:\right) +\beta^1 \alpha^* \Big)   \Big]\\  \\ 
&\quad +   \Big[2 :\alpha^1\alpha^*\alpha^{1*}:{_\lambda}  \Big(\alpha^1
(\alpha^*)^2 
 	+P\left(:\alpha^1 (\alpha^{1*} )^2: +2 : \alpha \alpha^{*} \alpha^{1*}:\right)  +P\beta \alpha^{1*}  \\
&\hskip 100pt +\chi_0\left((w^4-2cw^2+1)   \partial_w\alpha^{1*}   +(2w^3-2cw)   \alpha^{1*} \right)\Big)   \Big]\\  \\ 
&\quad    +  \Big[\beta\alpha^*{ _\lambda}  \Big(2P : \alpha \alpha^{*} \alpha^{1*}: +\beta^1 \alpha^* +P\beta \alpha^{1*}\Big)    \Big]\\  \\
&\quad  +\Big[\beta^1\alpha^{1*}{ _\lambda}  \Big(:\alpha^1(\alpha^*)^2 :
 	+P\alpha^1 (\alpha^{1*} )^2  +\beta^1 \alpha^* +P\beta \alpha^{1*} \Big)    \Big]\\  \\  
&\quad 
	  +\Big[  \chi_0\partial\alpha^*{_\lambda}  \Big( 2P : \alpha \alpha^{*} \alpha^{1*}:\Big)   \Big]  
\end{align*}

\begin{align*}
&=   2:\alpha^1\alpha^*(\alpha^*)^2 :
 	+2P: \alpha( \alpha^{*})^2 \alpha^{1*}: -4P:\alpha( \alpha^{*})^2 \alpha^{1*}: -4\delta_{r,0}P:   \alpha^{*} \alpha^{1*}:\lambda-4\delta_{r,0}P:  \partial( \alpha^{*}) \alpha^{1*}:  \\
	&\hskip 100pt   +\beta^1 (\alpha^*)^2
	 \\  \\ 
&\quad -2P:\alpha(\alpha^*)^2\alpha^{1*}:+2P\alpha^1\alpha^*(\alpha^{1*})^2 \\ 
&\quad  -4\delta_{r,0}  P:\alpha^{1*} \alpha^*:\lambda-4\delta_{r,0}\partial P:\alpha^{1*} \alpha^*:-4\delta_{r,0} P:\partial\alpha^{1*} \alpha^*:
  \\
&\quad -2P^2:\alpha  (\alpha^{1*} )^3: +2 P^2: \alpha(\alpha^{1*})^3:  +P\beta^1 (\alpha^{1*})^2  
 \\  \\ 
&\quad -  2 :\alpha^1\alpha^*(\alpha^*)^2:
 	+ 4P:\alpha^1  \alpha^*(\alpha^{1*})^2:-2P:\alpha^1\alpha^* (\alpha^{1*} )^2:-4\delta_{r,0}P: \alpha^*\alpha^{1*}:\lambda  -4\delta_{r,0}P: \partial(\alpha^*)\alpha^{1*}: \\
&\quad+4 P : \alpha(\alpha^*)^2\alpha^{1*}:  -4 P :\alpha^1  \alpha^{*} (\alpha^{1*})^2:  \\
&\quad -4\delta_{r,0}  P: \alpha^*\alpha^{1*}:\lambda 
  -4\delta_{r,0}  P: \alpha^*\partial\alpha^{1*}:  + 2P\beta:\alpha^*\alpha^{1*}:\\  \\ 
&\quad +   2\chi_0\Big( P:\partial \alpha^*\alpha^{1*}:+P :\alpha^*\partial\alpha^{1*}:+P :\alpha^*\alpha^{1*}:\lambda+\frac{1}{2}(\partial P):\alpha^* \alpha^{1*}:  \Big)\\  \\ 
&\quad    - 2P \beta   \alpha^{*} \alpha^{1*}
-2\kappa_0P\alpha^* \alpha^{1*} \lambda -2\kappa_0P\partial \alpha^* \alpha^{1*}   \\  \\
&\quad  -\beta^1(\alpha^*)^2 
 	-P\beta^1 (\alpha^{1*} )^2  -\kappa_0\left(2P\alpha^*\alpha^{1*}\lambda+2P\alpha^* \partial\alpha^{1*}+\partial  P\alpha^*\alpha^{1*} \right)    \\
&\quad 
+2 \chi_0P \alpha^{*} \alpha^{1*}\lambda     \\ \\ 
&=   -4\delta_{r,0}P:   \alpha^{*} \alpha^{1*}:\lambda-4\delta_{r,0}P:  \partial( \alpha^{*}) \alpha^{1*}:  \\
	&\hskip 100pt  +\chi_1\left(2:\alpha^*\partial \alpha^*:+:(\alpha^*)^2:\lambda  \right) \\  \\ 
&\quad  -4\delta_{r,0}  P:\alpha^{1*} \alpha^*:\lambda-4\delta_{r,0}\partial P:\alpha^{1*} \alpha^*:-4\delta_{r,0} P:\partial\alpha^{1*} \alpha^*:
  \\
&\quad
-4\delta_{r,0}P: \alpha^*\alpha^{1*}:\lambda  -4\delta_{r,0}P: \partial(\alpha^*)\alpha^{1*}: \\
&\quad -4\delta_{r,0}  P: \alpha^*\alpha^{1*}:\lambda 
    -4\delta_{r,0}  P: \alpha^*\partial\alpha^{1*}:  \\  \\ 
&\quad +   2\chi_0\Big( P:\partial \alpha^*\alpha^{1*}:+P :\alpha^*\partial\alpha^{1*}:+P :\alpha^*\alpha^{1*}:\lambda+\frac{1}{2}(\partial P):\alpha^* \alpha^{1*}:  \Big)\\  \\ 
&\quad   
-\kappa_0P\alpha^* \alpha^{1*} \lambda -\kappa_0P\partial \alpha^* \alpha^{1*}   \\  \\
&\quad  -\kappa_0\left(2P\alpha^*\alpha^{1*}\lambda+2P\alpha^* \partial\alpha^{1*}+\partial  P\alpha^*\alpha^{1*} \right)    \\
&\quad
 -2 \chi_0P \alpha^{*} \alpha^{1*}\lambda  \\
 &=0.
\end{align*}

\end{proof}

\begin{acknowledgement}
The first author would like to thank the other two authors and the University of S\~ao Paulo for hosting him while he visited Brazil in June of 2013 where part of this work was completed.   The second author was partially supported by Fapesp (2010/50347-9) and CNPq (301743/2007-0). The third author was supported by FAPESP (2012/02459-8).
\end{acknowledgement}

\begin{thebibliography}{DJKM83}

\bibitem[BCF09]{MR2541818}
Andr{\'e} Bueno, Ben Cox, and Vyacheslav Futorny.
\newblock Free field realizations of the elliptic affine {L}ie algebra
  {$\mathfrak{sl}(2,{\bf R})\oplus(\Omega_R/d{\rm R})$}.
\newblock {\em J. Geom. Phys.}, 59(9):1258--1270, 2009.

\bibitem[BI82]{MR663314}
Joaquin Bustoz and Mourad E.~H. Ismail.
\newblock The associated ultraspherical polynomials and their {$q$}-analogues.
\newblock {\em Canad. J. Math.}, 34(3):718--736, 1982.

\bibitem[Bre94a]{MR1261553}
Murray Bremner.
\newblock Generalized affine {K}ac-{M}oody {L}ie algebras over localizations of
  the polynomial ring in one variable.
\newblock {\em Canad. Math. Bull.}, 37(1):21--28, 1994.

\bibitem[Bre94b]{MR1303073}
Murray Bremner.
\newblock Universal central extensions of elliptic affine {L}ie algebras.
\newblock {\em J. Math. Phys.}, 35(12):6685--6692, 1994.

\bibitem[Bre95]{MR1249871}
Murray Bremner.
\newblock Four-point affine {L}ie algebras.
\newblock {\em Proc. Amer. Math. Soc.}, 123(7):1981--1989, 1995.

\bibitem[BS83]{MR85g:81096}
N.~N. Bogoliubov and D.~V. Shirkov.
\newblock {\em Quantum fields}.
\newblock Benjamin/Cummings Publishing Co. Inc. Advanced Book Program, Reading,
  MA, 1983.
\newblock Translated from the Russian by D. B. Pontecorvo.

\bibitem[BT07]{MR2286073}
Georgia Benkart and Paul Terwilliger.
\newblock The universal central extension of the three-point {$\germ{sl}_2$}
  loop algebra.
\newblock {\em Proc. Amer. Math. Soc.}, 135(6):1659--1668, 2007.

\bibitem[CF11]{MR2813377}
Ben Cox and Vyacheslav Futorny.
\newblock D{JKM} algebras {I}: their universal central extension.
\newblock {\em Proc. Amer. Math. Soc.}, 139(10):3451--3460, 2011.

\bibitem[CFT13]{MR3090080}
Ben Cox, Vyacheslav Futorny, and Juan~A. Tirao.
\newblock D{JKM} algebras and non-classical orthogonal polynomials.
\newblock {\em J. Differential Equations}, 255(9):2846--2870, 2013.

\bibitem[CJ]{CoxJur}
Ben~L. Cox and Elizabeth Jurisich.
\newblock Realizations of the three point {L}ie algebra
  {$\mathfrak{sl}(2,R)\oplus(\Omega_R/dR)$}.
\newblock {\em arXiv:1303.6973}.

\bibitem[Cox08]{MR2373448}
Ben Cox.
\newblock Realizations of the four point affine {L}ie algebra
  {$\mathfrak{sl}(2,R)\oplus(\Omega_R/dR)$}.
\newblock {\em Pacific J. Math.}, 234(2):261--289, 2008.

\bibitem[DJKM83]{MR701334}
Etsur{\=o} Date, Michio Jimbo, Masaki Kashiwara, and Tetsuji Miwa.
\newblock Landau-{L}ifshitz equation: solitons, quasiperiodic solutions and
  infinite-dimensional {L}ie algebras.
\newblock {\em J. Phys. A}, 16(2):221--236, 1983.

\bibitem[EFK98]{MR1629472}
Pavel~I. Etingof, Igor~B. Frenkel, and Alexander~A. Kirillov, Jr.
\newblock {\em Lectures on representation theory and {K}nizhnik-{Z}amolodchikov
  equations}, volume~58 of {\em Mathematical Surveys and Monographs}.
\newblock American Mathematical Society, Providence, RI, 1998.

\bibitem[FBZ01]{MR1849359}
Edward Frenkel and David Ben-Zvi.
\newblock {\em Vertex algebras and algebraic curves}, volume~88 of {\em
  Mathematical Surveys and Monographs}.
\newblock American Mathematical Society, Providence, RI, 2001.

\bibitem[FF90]{MR92f:17026}
Boris~L. Fe{\u\i}gin and Edward~V. Frenkel.
\newblock Affine {K}ac-{M}oody algebras and semi-infinite flag manifolds.
\newblock {\em Comm. Math. Phys.}, 128(1):161--189, 1990.

\bibitem[FF99]{MR1729358}
Boris Feigin and Edward Frenkel.
\newblock Integrable hierarchies and {W}akimoto modules.
\newblock In {\em Differential topology, infinite-dimensional {L}ie algebras,
  and applications}, volume 194 of {\em Amer. Math. Soc. Transl. Ser. 2}, pages
  27--60. Amer. Math. Soc., Providence, RI, 1999.

\bibitem[Fre05]{MR2146349}
Edward Frenkel.
\newblock Wakimoto modules, opers and the center at the critical level.
\newblock {\em Adv. Math.}, 195(2):297--404, 2005.

\bibitem[Fre07]{MR2332156}
Edward Frenkel.
\newblock {\em Langlands correspondence for loop groups}, volume 103 of {\em
  Cambridge Studies in Advanced Mathematics}.
\newblock Cambridge University Press, Cambridge, 2007.

\bibitem[FS05]{MR2183958}
Alice Fialowski and Martin Schlichenmaier.
\newblock Global geometric deformations of current algebras as
  {K}richever-{N}ovikov type algebras.
\newblock {\em Comm. Math. Phys.}, 260(3):579--612, 2005.

\bibitem[FS06]{FailS}
Alice Fialowski and Martin Schlichenmaier.
\newblock Global geometric deformations of the virasoro algebra, current and
  affine algebras by krichever-novikov type algebra.
\newblock {\em math.QA/0610851}, 2006.

\bibitem[Hua98]{MR99m:81001}
Kerson Huang.
\newblock {\em Quantum field theory}.
\newblock John Wiley \& Sons Inc., New York, 1998.
\newblock From operators to path integrals.

\bibitem[Ism05]{MR2191786}
Mourad E.~H. Ismail.
\newblock {\em Classical and quantum orthogonal polynomials in one variable},
  volume~98 of {\em Encyclopedia of Mathematics and its Applications}.
\newblock Cambridge University Press, Cambridge, 2005.
\newblock With two chapters by Walter Van Assche, With a foreword by Richard A.
  Askey.

\bibitem[JK85]{JK}
H.~P. Jakobsen and V.~G. Kac.
\newblock A new class of unitarizable highest weight representations of
  infinite-dimensional {L}ie algebras.
\newblock In {\em Nonlinear equations in classical and quantum field theory
  (Meudon/Paris, 1983/1984)}, pages 1--20. Springer, Berlin, 1985.

\bibitem[Kac98]{MR99f:17033}
Victor Kac.
\newblock {\em Vertex algebras for beginners}.
\newblock American Mathematical Society, Providence, RI, second edition, 1998.

\bibitem[Kas84]{MR772062}
Christian Kassel.
\newblock K\"ahler differentials and coverings of complex simple {L}ie algebras
  extended over a commutative algebra.
\newblock In {\em Proceedings of the {L}uminy conference on algebraic
  {$K$}-theory ({L}uminy, 1983)}, volume~34, pages 265--275, 1984.

\bibitem[KL82]{MR694130}
C.~Kassel and J.-L. Loday.
\newblock Extensions centrales d'alg\`ebres de {L}ie.
\newblock {\em Ann. Inst. Fourier (Grenoble)}, 32(4):119--142 (1983), 1982.

\bibitem[KL91]{MR1104840}
David Kazhdan and George Lusztig.
\newblock Affine {L}ie algebras and quantum groups.
\newblock {\em Internat. Math. Res. Notices}, (2):21--29, 1991.

\bibitem[KL93]{MR1186962}
D.~Kazhdan and G.~Lusztig.
\newblock Tensor structures arising from affine {L}ie algebras. {I}, {II}.
\newblock {\em J. Amer. Math. Soc.}, 6(4):905--947, 949--1011, 1993.

\bibitem[KN87a]{MR925072}
Igor~Moiseevich Krichever and S.~P. Novikov.
\newblock Algebras of {V}irasoro type, {R}iemann surfaces and strings in
  {M}inkowski space.
\newblock {\em Funktsional. Anal. i Prilozhen.}, 21(4):47--61, 96, 1987.

\bibitem[KN87b]{MR902293}
Igor~Moiseevich Krichever and S.~P. Novikov.
\newblock Algebras of {V}irasoro type, {R}iemann surfaces and the structures of
  soliton theory.
\newblock {\em Funktsional. Anal. i Prilozhen.}, 21(2):46--63, 1987.

\bibitem[KN89]{MR998426}
Igor~Moiseevich Krichever and S.~P. Novikov.
\newblock Algebras of {V}irasoro type, the energy-momentum tensor, and operator
  expansions on {R}iemann surfaces.
\newblock {\em Funktsional. Anal. i Prilozhen.}, 23(1):24--40, 1989.

\bibitem[MN99]{MR2000k:17036}
Atsushi Matsuo and Kiyokazu Nagatomo.
\newblock {\em Axioms for a vertex algebra and the locality of quantum fields},
  volume~4 of {\em MSJ Memoirs}.
\newblock Mathematical Society of Japan, Tokyo, 1999.

\bibitem[Sch03a]{MR2058804}
Martin Schlichenmaier.
\newblock Higher genus affine algebras of {K}richever-{N}ovikov type.
\newblock {\em Mosc. Math. J.}, 3(4):1395--1427, 2003.

\bibitem[Sch03b]{MR1989644}
Martin Schlichenmaier.
\newblock Local cocycles and central extensions for multipoint algebras of
  {K}richever-{N}ovikov type.
\newblock {\em J. Reine Angew. Math.}, 559:53--94, 2003.

\bibitem[She03]{MR2072650}
O.~K. Sheinman.
\newblock Second-order {C}asimirs for the affine {K}richever-{N}ovikov algebras
  {$\widehat{\mathfrak g\mathfrak l}_{g,2}$} and {$\widehat{\mathfrak
  s\mathfrak l}_{g,2}$}.
\newblock In {\em Fundamental mathematics today ({R}ussian)}, pages 372--404.
  Nezavis. Mosk. Univ., Moscow, 2003.

\bibitem[She05]{MR2152962}
O.~K. Sheinman.
\newblock Highest-weight representations of {K}richever-{N}ovikov algebras and
  integrable systems.
\newblock {\em Uspekhi Mat. Nauk}, 60(2(362)):177--178, 2005.

\bibitem[SS98]{MR1666274}
M.~Schlichenmaier and O.~K. Scheinman.
\newblock The {S}ugawara construction and {C}asimir operators for
  {K}richever-{N}ovikov algebras.
\newblock {\em J. Math. Sci. (New York)}, 92(2):3807--3834, 1998.
\newblock Complex analysis and representation theory, 1.

\bibitem[SS99]{MR1706819}
M.~Shlikhenmaier and O.~K. Sheinman.
\newblock The {W}ess-{Z}umino-{W}itten-{N}ovikov theory,
  {K}nizhnik-{Z}amolodchikov equations, and {K}richever-{N}ovikov algebras.
\newblock {\em Uspekhi Mat. Nauk}, 54(1(325)):213--250, 1999.

\bibitem[SV90]{MR1077959}
V.~V. Schechtman and A.~N. Varchenko.
\newblock Hypergeometric solutions of {K}nizhnik-{Z}amolodchikov equations.
\newblock {\em Lett. Math. Phys.}, 20(4):279--283, 1990.

\bibitem[Wak86]{W}
Minoru Wakimoto.
\newblock Fock representations of the affine {L}ie algebra ${A}\sp {(1)}\sb 1$.
\newblock {\em Comm. Math. Phys.}, 104(4):605--609, 1986.

\bibitem[Wim87]{MR915027}
Jet Wimp.
\newblock Explicit formulas for the associated {J}acobi polynomials and some
  applications.
\newblock {\em Canad. J. Math.}, 39(4):983--1000, 1987.

\end{thebibliography}
\def\cprime{$'$} \def\cprime{$'$}

 \end{document}